\documentclass[12pt]{article}
\usepackage[margin=1in]{geometry}
\usepackage{tikz}
\usepackage{amsmath,amssymb,amsthm}
\usepackage[hidelinks]{hyperref}

\DeclareMathOperator{\sgn}{sgn}
\DeclareMathOperator{\loops}{loops}
\DeclareMathOperator{\lng}{long}
\DeclareMathOperator{\cy}{cy}
\DeclareMathOperator{\wt}{wt}

\input{pictures}

\newtheorem{theorem}{Theorem}[section]
\newtheorem{proposition}{Proposition}[section]
\newtheorem{corollary}{Corollary}[section]
\newtheorem{conjecture}{Conjecture}[section]

\theoremstyle{definition}
\newtheorem{observation}{Observation}[section]
\newtheorem{definition}{Definition}[section]

\title{Complements of coalescing sets}
\author{Steve Butler\footnote{Iowa State University, Ames, IA 50011, USA, \texttt{\{butler,joeljef}@iastate.edu}\and 
Elena D'Avanzo\footnote{Carleton College, Northfield, MN 55057, USA, \texttt{davanzoe@carleton.edu}} \and 
Rachel Heikkinen\footnote{Augustana College, Rock Island, IL, 61201, USA, \texttt{rachelheikkinen19@augustana.edu}} \and 
Joel Jeffries\footnotemark[1] \and 
Alyssa Kruczek\footnote{Susquehanna University, Selinsgrove, PA 17870, USA, \texttt{a.kruczek0413@gmail.com}} \and
Harper Niergarth\footnote{University of Minnesota--Twin Cities, Minneapolis, MN 55455, USA, \texttt{nierg001@umn.edu}}}
\date{\empty}

\begin{document}

\maketitle

\begin{abstract}
We consider matrices of the form $qD+A$, with $D$ being the diagonal matrix of degrees, $A$ being the adjacency matrix, and $q$ a fixed value. Given a graph $H$ and $B\subseteq V(G)$, which we  call a coalescent pair $(H,B)$, we derive a formula for the characteristic polynomial where a copy of same rooted graph $G$ is attached by the root to \emph{each} vertex of $B$. Moreover, we establish  if $(H_1,B_1)$ and $(H_2,B_2)$ are two coalescent pairs which are cospectral for any possible rooted graph $G$, then $(H_1,V(H_1)\setminus B_1)$ and $(H_2,V(H_2)\setminus B_2)$ will also always be cospectral for any possible rooted graph $G$.
\end{abstract}

\section{Introduction}
Given a graph $G$, we will consider matrices of the form $L_q = qD+A$, where $D$ is the diagonal matrix of degrees, $A$ is the adjacency matrix, and $q$ is a fixed value. For various values of $q$, we have well-known matrices including the adjacency matrix ($q=0$), the signless Laplacian ($q=1$), and the Laplacian ($q=-1$; technically the negation of the Laplacian). Given a matrix, we can consider the problem of understanding the structure of the graph from the eigenvalues of the matrix. Each of the previously mentioned matrices have different strengths and weaknesses for understanding the structure of a graph \cite{haemers,butler}.

One simple graph operation that can be used to form large graphs from smaller graphs is \emph{coalescing}, where two graphs are merged into one by identifying a single vertex. More formally, we have the following.

\begin{definition}
A \emph{coalescent pair} $(H,B)$ consists of a graph $H$ with $B\subseteq V(H)$. The \emph{coalescing} of $(H,B)$ with a rooted graph $G$ with root $r$ is formed by taking $|B|$ copies of $G$ and, for each copy, identifying the root $r$ with a different vertex in $B$.

If the coalescent pairs $(H_1,B_1)$ and $(H_2,B_2)$ are cospectral for every rooted graph $G$ with respect to some matrix, then we say that they are \emph{coalescing cospectral} with respect to that matrix.
\end{definition}

In Section~\ref{sec:charpoly} we derive a formula for the characteristic polynomial when coalescing $(H,B)$ with a rooted graph $G$. From this we will derive necessary and sufficient conditions for $(H_1,B_1)$ and $(H_2,B_2)$ to be coalescing cospectral for some matrix. In Section~\ref{sec:complement}, we look at the relationships between $(H,B)$ and $(H,V(H)\setminus B)$ in regards to the characteristic polynomials under gluing. Using this we will establish the following main result of this paper.

\begin{theorem}\label{thm:main}
If  $(H_1,B_1)$ and $(H_2,B_2)$ are coalescing cospectral with respect to $L_q$, then $(H_1,V(H_1)\setminus B_1)$ and $(H_2,V(H_2)\setminus B_2)$ are also coalescing cospectral with respect to $L_q$.
\end{theorem}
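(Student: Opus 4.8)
The plan is to translate ``coalescing cospectral'' into a single polynomial identity attached to a coalescent pair, and then to observe that this identity transforms in an essentially trivial way under $B\mapsto V(H)\setminus B$. Write $P_B=\operatorname{diag}(\mathbf 1_B)$ for the diagonal $0/1$ matrix supported on $B$. Eliminating, by a Schur complement, the $|B|$ disjoint copies of the non-root part of $G$ from $L_q(H\odot_B G)$, the characteristic polynomial formula of Section~\ref{sec:charpoly} takes the form
\[
\det\bigl(xI-L_q(H\odot_B G)\bigr)=q_{0,G}(x)^{|B|}\,\Phi_{H,B}\!\Bigl(\tfrac{p_G(x)}{q_{0,G}(x)}-x,\;x\Bigr),
\]
where $p_G(x)=\det(xI-L_q(G))$, $q_{0,G}(x)$ is the characteristic polynomial of the principal submatrix of $L_q(G)$ with the root row and column deleted, and
\[
\Phi_{H,B}(t,x):=\det\bigl(xI-L_q(H)+tP_B\bigr)=\sum_{S\subseteq B}t^{|S|}\det\bigl((xI-L_q(H))[V(H)\setminus S]\bigr),
\]
with $(xI-L_q(H))[V(H)\setminus S]$ the principal submatrix obtained by deleting the rows and columns indexed by $S$. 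Thus the coefficient of $t^{k}$ in $\Phi_{H,B}$ is precisely the polynomial $c_k(H,B;x)=\sum_{|S|=k,\,S\subseteq B}\det((xI-L_q(H))[V(H)\setminus S])$ of Section~\ref{sec:charpoly}. The only consequence of that section I will use is the resulting characterization: since a suitable family of rooted graphs (for instance paths rooted at an endpoint) realizes infinitely many pairwise-distinct rational ``transfer functions'' $p_G/q_{0,G}-x$, the pairs $(H_1,B_1)$ and $(H_2,B_2)$ are coalescing cospectral with respect to $L_q$ if and only if $\Phi_{H_1,B_1}(t,x)\equiv\Phi_{H_2,B_2}(t,x)$ in $\mathbb{R}[t,x]$; in particular this already forces $|B_1|=|B_2|$ and $|V(H_1)|=|V(H_2)|$.

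The key point is that replacing $B$ by its complement is an involutive change of variables on $\Phi$. Since $P_{V(H)\setminus B}=I-P_B$, we get
\[
\Phi_{H,\,V(H)\setminus B}(t,x)=\det\bigl(xI-L_q(H)+t(I-P_B)\bigr)=\det\bigl((x+t)I-L_q(H)-tP_B\bigr)=\Phi_{H,B}(-t,\,x+t).
\]
Now suppose $(H_1,B_1)$ and $(H_2,B_2)$ are coalescing cospectral with respect to $L_q$. By the characterization, $\Phi_{H_1,B_1}(t,x)\equiv\Phi_{H_2,B_2}(t,x)$. Applying the substitution $(t,x)\mapsto(-t,\,x+t)$ to both sides and using the displayed identity for each pair yields $\Phi_{H_1,\,V(H_1)\setminus B_1}(t,x)\equiv\Phi_{H_2,\,V(H_2)\setminus B_2}(t,x)$. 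Invoking the characterization once more, $(H_1,V(H_1)\setminus B_1)$ and $(H_2,V(H_2)\setminus B_2)$ are coalescing cospectral with respect to $L_q$, which is the claim.

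The genuine obstacle lies entirely in the first step: one must be sure that the characterization from Section~\ref{sec:charpoly} is exactly the equality of the bivariate polynomials $\Phi_{H,B}$ — equivalently, that no nonzero polynomial combination $\sum_k c_k(x)\,t^k$ can vanish identically along all the transfer functions $p_G/q_{0,G}-x$, which is what collapses ``cospectral for every rooted $G$'' down to a finite amount of data. Granting that, the rest is light: recognizing the generating polynomial $\sum_k c_k(H,B;x)\,t^k$ as the determinant $\det(xI-L_q(H)+tP_B)$, and then noting $P_{V(H)\setminus B}=I-P_B$, turns the complement theorem into a two-line computation plus one substitution. (I would also double-check the bookkeeping for the extra root-degree term $q\deg_G(r)$ in the Schur complement, which is what makes the transfer function come out as $p_G/q_{0,G}-x$ rather than $p_G/q_{0,G}$, but this is routine.)
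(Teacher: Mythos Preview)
Your proof is correct and is genuinely cleaner than the paper's. Both arguments hinge on Theorem~\ref{thm:necessary}, i.e.\ the characterization ``coalescing cospectral $\Longleftrightarrow$ $f_{H_1,B_1,k}=f_{H_2,B_2,k}$ for all $k$'', but they diverge sharply in how they handle the passage to the complement.

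The paper proves Theorem~\ref{thm:complement} by unpacking each $f_{H,B,k}$ into contributions $w(i,j)$ of cycle decompositions stratified by how many vertices they use inside and outside $B$, and then showing that certain binomial-coefficient matrices are invertible so that the $w(i,j)$, and hence the complementary family $f_{H,V(H)\setminus B,k}$, can be recovered. Your approach bypasses all of this by repackaging the family $\{f_{H,B,k}\}_k$ as the single determinant $\Phi_{H,B}(t,x)=\det(xI-L_q(H)+tP_B)$ and then noting the one-line identity
\[
\Phi_{H,\,V(H)\setminus B}(t,x)=\det\bigl((x+t)I-L_q(H)-tP_B\bigr)=\Phi_{H,B}(-t,\,x+t),
\]
which makes Theorem~\ref{thm:complement} an immediate change of variables. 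What you gain is brevity and a transparent explanation of \emph{why} complementation preserves coalescing cospectrality: it is literally an invertible substitution on the generating polynomial. What the paper's route gains is an explicit decomposition into the quantities $w(i,j)$, which carry finer combinatorial information than the bare identity $\Phi_{H_1,B_1}\equiv\Phi_{H_2,B_2}$ and could be useful elsewhere.

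Two small remarks. First, you need not re-derive the Schur-complement form yourself: factoring $(p_{G,r})^{|B|}$ out of the formula in Theorem~\ref{thm:charpoly} gives exactly your expression, with the degree bookkeeping for $q\deg_G(r)$ already absorbed. Second, for the ``infinitely many distinct transfer functions'' step you can simply cite Theorem~\ref{thm:necessary} directly (the paper uses stars $K_{1,\ell}$ rather than paths, but the conclusion is the same), so there is no remaining obstacle.
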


The case $(H_1,\emptyset)$ and $(H_2,\emptyset)$ can be restated in the following way.

\begin{corollary}\label{cor:emptyglue}
If graphs $H_1$ and $H_2$ are cospectral for $L_q$, then the graphs resulting from attaching an arbitrary rooted graph $G$ to \emph{each} vertex of $H_1$ and each vertex of $H_2$ will also be cospectral.
\end{corollary}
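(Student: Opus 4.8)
The plan is to read Corollary~\ref{cor:emptyglue} off Theorem~\ref{thm:main} and then spend the rest of the proposal on the theorem itself, which carries all of the content. For the corollary, apply Theorem~\ref{thm:main} with $B_1=B_2=\emptyset$. Coalescing $(H,\emptyset)$ with any rooted graph $G$ attaches $|\emptyset|=0$ copies of $G$ and hence returns $H$ unchanged, so ``$(H_1,\emptyset)$ and $(H_2,\emptyset)$ are coalescing cospectral with respect to $L_q$'' is exactly the hypothesis that $H_1$ and $H_2$ are cospectral for $L_q$. Theorem~\ref{thm:main} then asserts that $(H_1,V(H_1))$ and $(H_2,V(H_2))$ are coalescing cospectral, and coalescing $(H,V(H))$ with $G$ is precisely attaching a copy of $G$ at every vertex of $H$. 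That is the corollary, so it remains to outline a proof of the theorem.

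The first ingredient is from Section~\ref{sec:charpoly}. I expect the characteristic polynomial of the coalescing of $(H,B)$ with a rooted graph $G$ (root $r$) to factor, via a Schur complement on the $|B|$ deleted-root copies (or combinatorially, using the loop/cycle expansion of a determinant), as
\[
\phi(x)=\det\bigl(xI-C_G\bigr)^{|B|}\cdot\det\bigl(xI-L_q(H)-\Gamma_G(x)\,P_B\bigr),
\]
where $C_G$ is the $L_q$-type block of $G-r$, the matrix $P_B$ is the diagonal $0/1$ matrix supported on $B$, and $\Gamma_G(x)=q\deg_G(r)+b^\top(xI-C_G)^{-1}b$ is a scalar ``transfer function'' of the rooted graph ($b$ the neighbour-indicator of $r$). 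As $G$ ranges over all rooted graphs, $\Gamma_G(x)$ takes infinitely many values at a generic $x$ — e.g.\ $G=K_{1,m}$ gives $\Gamma_G(x)=m\bigl(q+\tfrac{1}{x-q}\bigr)$ (with $\tfrac{1}{x}$ when $q=0$), and $G$ a single vertex gives $\Gamma_G\equiv 0$ — while matching vertex counts across all $G$ forces $|B_1|=|B_2|$ and $|V(H_1)|=|V(H_2)|$. Cancelling the common prefactor and a short interpolation argument then convert ``coalescing cospectral for $L_q$'' into the single bivariate polynomial identity $F_{H_1,B_1}=F_{H_2,B_2}$, where $F_{H,B}(x,t):=\det\bigl(xI-L_q(H)-t\,P_B\bigr)$.

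The second ingredient, the complement, is a one-line matrix identity. Writing $\overline B=V(H)\setminus B$, the matrices $P_B$ and $P_{\overline B}$ are complementary diagonal $0/1$ matrices, so $P_{\overline B}=I-P_B$ and hence
\[
xI-L_q(H)-t\,P_{\overline B}=(x-t)I-L_q(H)-(-t)\,P_B,
\]
that is, $F_{H,\overline B}(x,t)=F_{H,B}(x-t,-t)$. The substitution $(x,t)\mapsto(x-t,-t)$ is an invertible change of variables, so from $F_{H_1,B_1}=F_{H_2,B_2}$ we obtain $F_{H_1,\overline B_1}=F_{H_2,\overline B_2}$; feeding this back through the characterization of the first ingredient shows that $(H_1,\overline B_1)$ and $(H_2,\overline B_2)$ are coalescing cospectral with respect to $L_q$, which is the theorem.

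I expect the real obstacle to lie entirely inside the first ingredient: producing the characteristic polynomial factorization and, more delicately, showing that the ``for every rooted graph $G$'' quantifier collapses to the single bivariate identity. That step needs the precise form of $\Gamma_G$, honest bookkeeping of the $\det(xI-C_G)^{|B|}$ prefactor and of the vertex and $|B|$ counts, and a check of the degenerate cases ($G$ a single vertex, and $q=0$). Granting Section~\ref{sec:charpoly}'s characterization, the complement step is essentially free: it is just $P_{\overline B}=I-P_B$ together with the observation that this amounts to a linear substitution in $(x,t)$.
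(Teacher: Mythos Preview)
Your reduction of the corollary to Theorem~\ref{thm:main} via $B_1=B_2=\emptyset$ is exactly the paper's. For the theorem, your first ingredient is essentially the paper's Theorems~\ref{thm:charpoly}--\ref{thm:necessary} repackaged: the paper's polynomial family $f_{H,B,k}(x)=\sum_{S\subseteq B,\,|S|=k}p_{H,S}(x)$ and your bivariate $F_{H,B}(x,t)=\det(xI-L_q(H)-tP_B)$ carry the same information, since expanding the determinant along the perturbed diagonal gives $F_{H,B}(x,t)=\sum_k(-t)^kf_{H,B,k}(x)$, and both arguments use the stars $K_{1,m}$ for the interpolation that collapses ``for all $G$'' to a polynomial identity. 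Where you genuinely diverge is the complement step. The paper's Theorem~\ref{thm:complement} refines cycle decompositions by how many vertices fall in $B$ versus $\overline B$, producing auxiliary weights $w(i,j)$; it then shows that the coefficients of the $f_{H,B,k}$ determine the $w(i,j)$ by explicitly inverting binomial-coefficient matrices, after which the $f_{H,\overline B,k}$ are rebuilt from the $w(i,j)$. Your route replaces this entire argument with the single observation $P_{\overline B}=I-P_B$, yielding $F_{H,\overline B}(x,t)=F_{H,B}(x-t,-t)$ immediately. This is markedly simpler and more conceptual; what the paper's approach buys is that it exposes the finer invariants $w(i,j)$, which could in principle be useful for other questions, but they are not needed for the theorem as stated.
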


In Section~\ref{sec:unions}, we explore some other operations that might be carried out for coalescing sets. Finally, we conclude the paper in Section~\ref{sec:examples} with some examples for various values of $q$ and some additional applications.

\section{Characteristic polynomial for coalescing}\label{sec:charpoly}
Given a graph $H$ and $S\subseteq V(H)$, we let $p_{H,S}(x)$ be the characteristic polynomial for the matrix that results from taking $L_q$ for the graph and deleting the rows/columns associated with the vertices $S$. When $S=\{v\}$, a single vertex, we will write this as $p_{H,v}$. We will let $p_H(x)=p_{H,\emptyset}(x)$ denote the characteristic polynomial of the $L_q$ matrix for the graph $H$. When $q\ne 0$, $p_{H,D}$ is \emph{not} the same as finding the characteristic polynomial for the graph $H$ with the vertices $S$ deleted as there will be differences in the diagonal terms.

For the adjacency matrix ($L_0$), Schwenk \cite{schwenk} established that the graph $\mathcal{G}$ resulting from coalescing $(H,\{v\})$ with a rooted graph $G$ with root vertex $r$ is
\begin{equation}\label{eq:simplecase}
p_{\mathcal{G}}(x)
=\big(p_{G,r}(x)\big)p_{H,\emptyset}(x)+\big(p_{G,\emptyset}(x)-xp_{G,r}(x)\big)p_{H,v}(x).
\end{equation}
This was later generalized to the Laplacian matrix ($L_{-1}$) and signless Laplacian matrix ($L_{1}$) by Guo, Li, and Shiu \cite{Guo} with the same formulation. We now further generalize this result.

\begin{theorem}\label{thm:charpoly}
Let $(H,B)$ be a coalescent pair, let $\mathcal{G}$ be the graph resulting from coalescing $(H,B)$ with the graph $G$ rooted at $r$, and let $T\subseteq V(H)$ satisfy $T\cap B=\emptyset$. Then for the matrix $L_q=qD+A$, we have
\begin{equation}\label{eq:general_charpoly}
p_{\mathcal{G},T}(x)=\sum_{k=0}^{|B|}\bigg(\big(p_{G,r}(x)\big)^{|B|-k}\big(p_{G,\emptyset}(x)-xp_{G,r}(x)\big)^{k}
\sum_{\substack{S\subseteq B\\|S|=k}}p_{H,S\cup T}(x)\bigg).
\end{equation}
Setting $T=\emptyset$ gives the characteristic polynomial for $p_{\mathcal{G}}(x)$.
\end{theorem}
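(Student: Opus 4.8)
The plan is to induct on $|B|$, peeling off one vertex of $B$ at a time and invoking a single‑vertex coalescing identity in the spirit of Schwenk's formula~\eqref{eq:simplecase}, but applied to the matrix $L_q$ and in the presence of the deleted set $T$. The base case $|B|=0$ is immediate: the sum on the right collapses to the single term $k=0$, which is $p_{H,T}(x)=p_{\mathcal G,T}(x)$ since $\mathcal G=H$ when no copies of $G$ are attached.

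For the inductive step, fix a vertex $v\in B$ and let $B'=B\setminus\{v\}$. First I would establish the single‑vertex version of the formula: if $\mathcal G'$ is obtained from a coalescent pair $(H',B')$ by further attaching one copy of $G$ at a vertex $v\notin B'$, and $T'\subseteq V(H')$ avoids $B'\cup\{v\}$, then
\begin{equation}\label{eq:onestep}
p_{\mathcal G',T'}(x)=\big(p_{G,r}(x)\big)\,p_{\mathcal G'',T'}(x)+\big(p_{G,\emptyset}(x)-xp_{G,r}(x)\big)\,p_{\mathcal G'',T'\cup\{v\}}(x),
\end{equation}
where $\mathcal G''$ is the coalescing of $(H',B')$ with $G$ (so $\mathcal G'$ is $\mathcal G''$ with one more copy of $G$ glued at $v$). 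This is proved by a Schur‑complement / cofactor expansion of $\det\big(xI-L_q(\mathcal G')\big)$ along the block of rows and columns corresponding to the newly attached copy of $G$, exactly as in the proofs of~\eqref{eq:simplecase} in \cite{schwenk} and its $L_q$ generalizations in \cite{Guo}; the only new feature is that deleting the rows/columns of $T'$ does not interact with that copy of $G$ (since $T'\cap\{v\}=\emptyset$ and $T'\subseteq V(H')$), so the expansion goes through verbatim with $p_{H,\cdot}$ replaced by $p_{\mathcal G'',\cdot}$. Note that the degree of $v$ in $\mathcal G'$ exceeds its degree in $\mathcal G''$ by $\deg_G(r)$, and this is precisely absorbed into the combination $p_{G,\emptyset}-xp_{G,r}$, which is why the same coefficients appear for $L_q$ as for $L_0$.

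Now apply~\eqref{eq:onestep} with $H'=H$, $B'=B\setminus\{v\}$, $T'=T$, so $\mathcal G''$ is the coalescing of $(H,B\setminus\{v\})$ with $G$. Both $p_{\mathcal G'',T}(x)$ and $p_{\mathcal G'',T\cup\{v\}}(x)$ are of the form covered by the inductive hypothesis (with coalescent pair $(H,B\setminus\{v\})$ and deleted sets $T$ and $T\cup\{v\}$ respectively, both disjoint from $B\setminus\{v\}$). Substituting the two inductive expansions and collecting terms, one gets a double sum over $k$ and over subsets of $B\setminus\{v\}$; a subset $S\subseteq B$ of size $k$ either contains $v$ (contributing through the $p_{\mathcal G'',T\cup\{v\}}$ term, with $S\setminus\{v\}$ of size $k-1$) or does not (contributing through the $p_{\mathcal G'',T}$ term, with $S$ of size $k$). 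Matching powers of $p_{G,r}$ and of $p_{G,\emptyset}-xp_{G,r}$ then reassembles exactly the claimed sum~\eqref{eq:general_charpoly} for $B$; this is the routine Pascal‑type bookkeeping step. Setting $T=\emptyset$ gives the last sentence of the theorem.

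The main obstacle is the clean proof of the one‑step identity~\eqref{eq:onestep} for $L_q$ with a deleted set present: one must set up the block structure of $L_q(\mathcal G')$ carefully so that the Schur complement of the $G$‑block is visibly independent of which copy of $G$ was attached and of $T$, and so that the extra $q\deg_G(r)$ on the diagonal entry of $v$ is correctly attributed. Once that identity is in hand with the right hypotheses ($T$ disjoint from $B$, and from the new vertex), the induction is purely formal.
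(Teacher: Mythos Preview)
Your inductive architecture is exactly that of the paper: base case $|B|=0$, a single-vertex coalescing identity as the inductive engine, and then the Pascal-type bookkeeping splitting $S\subseteq B$ according to whether $v\in S$; the paper carries out that last computation line by line and it matches what you describe.

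The one genuine difference is how the single-vertex identity~\eqref{eq:onestep} is established. The paper does \emph{not} use a Schur complement or block cofactor expansion; instead it argues combinatorially via cycle decompositions of $\det(xI-L_q)$. Every cycle decomposition of $\mathcal G'$ avoiding $T'$ is classified by what happens at the cut vertex $v$: either $v$ lies in a cycle inside $H-T'$ (Case~I), in a cycle inside the attached $G$ (Case~II), or is unused (Case~III). The delicate point for $q\ne0$ is the loop case, where the weight carries a factor $q\deg_{\mathcal G'}(v)=q\deg_H(v)+q\deg_G(r)$; the paper checks directly that this additivity makes the contributions from Cases~I and~II sum correctly. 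Your Schur-complement route is equally valid (the identity $p_{G,\emptyset}/p_{G,r}=(x-q\deg_G r)-bD^{-1}b^{T}$ from the $1\times1$ Schur complement of $xI-L_q(G)$ at $r$ is exactly what absorbs the extra $q\deg_G(r)$, as you note), and arguably cleaner once the block structure is written down; it does require $p_{G,r}(x)\ne0$, but since the conclusion is a polynomial identity this is harmless. The cycle-decomposition proof, on the other hand, is self-contained, avoids any invertibility caveat, and feeds into the later sections of the paper where the same $w(i,j)$ bookkeeping is reused.
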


We first recall some basics of cycle decompositions for computing the characteristic polynomial for undirected graphs (and more generally symmetric matrices with $0$-$1$ entries in the off-diagonal). Given a graph $\mathcal{G}$ on $n$ vertices, then
\[
p_{\mathcal{G}}(x)=\det(xI-L_q)=\sum_{\sigma\in\mathcal{S}_n}(-1)^{\sgn(\sigma)}m_{1,\sigma(1)}\cdots m_{n,\sigma(n)}
\]
where $\mathcal{S}_n$ is the set of all permutations $\sigma$ of $\{1,\ldots,n\}$, $\sgn(\sigma)$ is the sign of the permutation, and $m_{i,j}=(xI-L_q)_{i,j}$. Any term on the right side involving an entry of the matrix which is $0$ will vanish, and so any remaining terms can be interpreted in the original graph as collections of (directed) edges in the graph which form collections of ``cycles'' (cycles take three forms: (1) loop at a single vertex; (2) an edge between two vertices; (3) cycle of length three or more). 

In particular, we have the following:
\[
p_{\mathcal{G}}(x)=\sum_{C\in\mathcal{C}}\underbrace{x^{u(C)}2^{\lng(C)}(-1)^{\cy(C)}\prod_{v\in\loops(C)}\big(q\deg_{\mathcal{G}}(v)\big)}_{=\wt(C)}=\sum_{C\in\mathcal{C}}\wt(C).
\]
This latter sum runs over all possible ways to have vertex disjoint cycles in the graph (these are known as the cycle decompositions and also include the choice of using no cycles at all).  We have $u(C)$ is the number of unused vertices not involved in any cycle, $\lng(C)$ is the number of cycles of length three or greater, $\cy(C)$ is the number of cycles, and $\loops(C)$ are the loops of the cycle decomposition.

We are separating out the diagonal terms into two cases so that $x$ will be when a vertex is unused and $q\deg(v)$ will be when a vertex is a loop. If we hold the rest of the cycle decomposition fixed and only change whether a particular vertex has a loop, the combination of the non-used and looped cases gives $x-q\deg(v)$ times the weight of the remaining cycle decomposition (recall that adding the loop adds one more cycle which causes a sign change).

More information about using cycle decompositions for computing characteristic polynomials can be found in Brualdi and Ryser \cite{brualdi}. 

\begin{proof}[Proof of Theorem~\ref{thm:charpoly}]
We will proceed by induction. For the base case of $B=\emptyset$, if we start with $H$, there is nowhere to glue on a copy of $G$, and so $\mathcal{G}=H$. Also, we have \eqref{eq:general_charpoly} becomes $p_{\mathcal{G},T}(x)=p_{H,T}(x)$, establishing the base case.

The key for induction will be the following formula. Let $\mathcal{G}$ be the result of coalescing $(H,\{v\})$ with the graph $G$ rooted at $r$. Then
\[
p_{\mathcal{G},T}(x)=
\underbrace{p_{G,r}(x)p_{H,T}(x)}_{\text{Case I}}+
\underbrace{p_{G,\emptyset}(x)p_{H,T\cup\{v\}}(x)}_{\text{Case II}}-
\underbrace{xp_{G,r}(x)p_{H,T\cup\{v\}}(x)}_{\text{Case III}}.
\]
When computing cycle decompositions, we proceed as before, only we discard any cycle which uses a vertex in $T$ (since that portion of the matrix is deleted). Since $v$ will be a cut-point in $\mathcal{G}$, any cycle involving $v$ can be wholly contained in $G$ or wholly contained in $H-T$. This allows us to break the possibilities up into three cases:
\begin{itemize}
\item Cycle decompositions where $v$ is in a cycle contained in $H-T$. All such possible cycle decompositions consist of a cycle decomposition in $H-T$ with a cycle decomposition in $G-r$, which will be accounted for in Case~I above.
\item Cycle decompositions where $v$ is in a cycle contained in $G$. All such possible cycle decompositions consist of a cycle decomposition in $H-(T\cup\{v\})$ with a cycle decomposition in $G$, which will be accounted for in Case~II above.
\item Cycle decompositions where $v$ is \emph{not} involved in any cycle. All such possible  cycle decompositions consist of a cycle decomposition in $H-(T\cup\{v\})$ with a cycle decomposition in $G-r$ and then an unused vertex ($v$), which will be accounted for in Case~III above.
\end{itemize}

Now we look at how a single cycle decomposition $C$ of $\mathcal{G}$ not involving any vertex in $T$ contributes to both sides.
\begin{itemize}
\item If $C$ involves $v$ in an edge or longer cycle, then if the cycle is in $H$, the contribution from Case I on the right is the same as the left, while if the cycle is in $G$, the contribution from Case II on the right is the same as the left.
\item If $C$ does not involve $v$ in any cycle, then each term from the three cases on the right makes the same contribution as it does on the left, but with the negation happening, the net contribution becomes the same.
\item If $C$ involves $v$ in a loop, then the contribution on the left is $\wt(C')qd_{\mathcal{G}}(v)$, where $C'$ is the portion of the cycle decomposition of $C$ not involving $v$. The contribution from the first and second terms on the right are $\wt(C')qd_{H}(v)$ and $\wt(C')qd_{G}(r)$, respectively, while the third term will not make a contribution. Since $d_{\mathcal{G}}(v)=d_H(v)+d_G(r)$, equality of the contribution to the two sides follows.
\end{itemize}
Finally, every cycle decomposition arising from terms on the right have been accounted for, and since in all cases the contribution matches, equality is established.

We now assume that the result holds for all graphs and all subsets $B'$ with $|B'|\le i$, and consider the case $(H,B)$ where $B=B'\cup \{v\}$ with $|B'|=i$. Let $\mathcal{G}'$ denote the graph resulting from coalescing $(H,B')$ with the graph $G$ rooted at $r$, and $\mathcal{G}$ denote the graph resulting from coalescing $(H,B)$ with the graph $G$ rooted at $r$ (which can also be interpreted as further coalescing $\mathcal{G}'$ at vertex $v$ with $G$ at vertex $r$). We now have
\begin{align*} 
p_{\mathcal{G},T}(x)&=
\big(p_{G,r}(x)\big)p_{\mathcal{G}',T}(x){+}\big(p_{G,\emptyset}(x){-}xp_{G,r}(x)\big)p_{\mathcal{G}',T\cup\{v\}}(x)\\
&=
\big(p_{G,r}(x)\big)\bigg(\sum_{S\subseteq B'}p_{H,S\cup T}(x)\big(p_{G,r}(x)\big)^{|B'|{-}|S|}\big(p_{G,\emptyset}(x){-}xp_{G,r}(x)\big)^{|S|}\bigg)
\\&\phantom{~=}{+}
\big(p_{G,\emptyset}(x){-}xp_{G,r}(x)\big)\bigg(\sum_{\substack{S\cup\{v\}\\S\subseteq B'}}p_{H,S\cup T\cup\{v\}}(x)\big(p_{G,r}(x)\big)^{|B'|{-}|S|}\big(p_{G,\emptyset}(x){-}xp_{G,r}(x)\big)^{|S|}\bigg)\\
& =
\big(p_{G,r}(x)\big)\bigg(\sum_{\substack{S\subseteq B\\v\notin S}}p_{H,S\cup T}(x)\big(p_{G,r}(x)\big)^{|B|{-}1{-}|S|}\big(p_{G,\emptyset}(x){-}xp_{G,r}(x)\big)^{|S|}\bigg)
\\&\phantom{~=}{+}
\big(p_{G,\emptyset}(x){-}xp_{G,r}(x)\big)\bigg(\sum_{\substack{S\subseteq B\\v\in S}}p_{H,S\cup T}(x)\big(p_{G,r}(x)\big)^{|B|{-}|S|}\big(p_{G,\emptyset}(x){-}xp_{G,r}(x)\big)^{|S|{-}1}\bigg)\\
&=\sum_{S\subseteq B}p_{H,S\cup T}(x)\big(p_{G,r}(x)\big)^{|B|{-}|S|}\big(p_{G,\emptyset}(x){-}xp_{G,r}(x)\big)^{|S|}\\
&=\sum_{k=0}^{|B|}\bigg(\big(p_{G,r}(x)\big)^{|B|{-}k}\big(p_{G,\emptyset}(x){-}xp_{G,r}(x)\big)^{k}
\sum_{\substack{S\subseteq B\\|S|=k}}p_{H,S\cup T}(x)\bigg).
\end{align*}
In the first step we applied the formula from above of gluing at a single vertex. We then applied the induction hypotheses for $p_{\mathcal{G}',T}(x)$ and $p_{\mathcal{G}',T\cup\{v\}}(x)$. In going to the third line, we rewrote the second line in terms of $B$, noting $|B'|=|B|-1$, and in the second part, since we insist on $v\in S$, we need to also have $|S|$ replaced by $|S|-1$. Distributing the terms in and combining the sums then gives us the fourth line. Finally, combining subsets of $S$ by their size and pulling out common factors gives our final line, establishing our result.
\end{proof}

Examining the expression from Theorem~\ref{thm:charpoly}, we see that the contributions from $H$ in computing the characteristic polynomials for $p_\mathcal{G}(x)$ are
\[
f_{H,B,k}(x) = \sum_{\substack{S\subseteq B\\|S|=k}}p_{H,S}(x).
\]
If these polynomials match between two different coalescing pairs, then the results of coalescing will result in graphs which have the same characteristic polynomial (are cospectral). This is stated more formally in the following result.

\begin{theorem}\label{thm:necessary}
Let $(H_1,B_1)$ and $(H_2,B_2)$ be coalescing pairs. Then $f_{H_1,B_1,k}(x)=f_{H_2,B_2,k}(x)$ for all $k$ if and only if $(H_1,B_1)$ and $(H_2,B_2)$ are coalescing cospectral.
\end{theorem}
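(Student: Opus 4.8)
The plan is to prove the two implications separately. The forward implication (matching $f$'s $\Rightarrow$ coalescing cospectral) is essentially a repackaging of Theorem~\ref{thm:charpoly}: taking $T=\emptyset$ and grouping the subsets $S$ by their size, the characteristic polynomial of the coalescing $\mathcal{G}$ of $(H,B)$ with a rooted graph $G$ is
\[
p_{\mathcal{G}}(x)=\sum_{k\ge 0}\big(p_{G,r}(x)\big)^{|B|-k}\big(p_{G,\emptyset}(x)-xp_{G,r}(x)\big)^{k}f_{H,B,k}(x),
\]
the sum effectively truncating at $k=|B|$ since $f_{H,B,k}=0$ for $k>|B|$. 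If $f_{H_1,B_1,k}=f_{H_2,B_2,k}$ for all $k$, then comparing top-degree terms (note $f_{H,B,k}$ has degree $|V(H)|-k$ and leading coefficient $\binom{|B|}{k}$) forces $|V(H_1)|=|V(H_2)|$ and $|B_1|=|B_2|$, and then the two expressions above agree term by term for every $G$, so the coalescings are cospectral.

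For the converse I would first pin down the parameters. Coalescing with $G=K_1$ is the identity operation, so $p_{H_1}=p_{H_2}$, hence $|V(H_1)|=|V(H_2)|$; since the coalescing of $(H,B)$ with $G$ has $|V(H)|+|B|\cdot(|V(G)|-1)$ vertices, cospectrality for any one fixed $G$ with $|V(G)|\ge 2$ then forces $|B_1|=|B_2|=:m$. Writing $\delta_k:=f_{H_1,B_1,k}-f_{H_2,B_2,k}$ (which vanishes for $k>m$), the displayed formula shows that for every rooted graph $G$,
\[
\sum_{k=0}^m\big(p_{G,r}(x)\big)^{m-k}\big(p_{G,\emptyset}(x)-xp_{G,r}(x)\big)^{k}\,\delta_k(x)\equiv 0,
\]
and the remaining task is to feed in enough rooted graphs $G$ to conclude $\delta_0=\cdots=\delta_m=0$.

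The key step is to test against the stars $G=K_{1,j}$ rooted at the center, for $j=1,2,\dots,m+1$. A direct computation with $L_q$ gives $p_{K_{1,j},r}(x)=(x-q)^j$ and $p_{K_{1,j},\emptyset}(x)=(x-q)^{j-1}\big((x-q)(x-jq)-j\big)$, hence $p_{K_{1,j},\emptyset}(x)-x\,p_{K_{1,j},r}(x)=-j(x-q)^{j-1}\big(q(x-q)+1\big)$. Substituting these into the relation above and setting $y=x-q$ and $t=t(x)=-(qy+1)/y$, the $j$-th relation collapses to $y^{jm}\sum_{k=0}^m\delta_k(x)\,t^k j^k=0$. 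For all but the finitely many values of $x$ at which $y=0$ or $t=0$ we may cancel $y^{jm}$, obtaining $\sum_{k=0}^m(\delta_k(x)t^k)\,j^k=0$ for the $m+1$ distinct values $j=1,\dots,m+1$; a polynomial of degree $\le m$ in $j$ with $m+1$ roots is identically zero, so $\delta_k(x)t^k=0$, and since $t\ne 0$ we get $\delta_k(x)=0$ for every $k$. As this holds for infinitely many $x$, each polynomial $\delta_k$ is identically zero, which is exactly the claim.

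I expect the only real obstacle to be the choice of test family: one needs rooted graphs for which the pair $\big(p_{G,r},\,p_{G,\emptyset}-xp_{G,r}\big)$ produces enough independent data to invert the system in the unknowns $\delta_k$. Stars are well suited because $p_{K_{1,j},r}$ is a pure power of $x-q$ and the second coordinate differs from a corresponding power only by the scalar factor $-j\big(q(x-q)+1\big)$; this collapses the $j$-dependence of each relation into a genuine univariate polynomial of degree $m$ in $j$, so a Vandermonde-type argument applies directly. Verifying the two star formulas, and the mild bookkeeping of which finitely many $x$ (with a short case split on $q=0$ versus $q\ne 0$ for when $t$ vanishes) must be excluded, is the only genuine computation involved.
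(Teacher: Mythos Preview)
Your proof is correct and follows essentially the same route as the paper: the forward direction is immediate from Theorem~\ref{thm:charpoly}, and the backward direction plugs in the stars $K_{1,j}$ rooted at the center, reduces the identity to a polynomial relation in $j$ (equivalently in the paper's variable $Y=\ell(q^2-qx-1)/(x-q)=jt$), and kills the coefficients by a Vandermonde/infinitely-many-roots argument. Your extra bookkeeping to pin down $|V(H_1)|=|V(H_2)|$ and $|B_1|=|B_2|$ in each direction is a nice touch that the paper leaves implicit.
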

\begin{proof}
Using Theorem~\ref{thm:charpoly}, with $T=\emptyset$, we have
\begin{multline*}
p_{\mathcal{G}_1}(x)=
\sum_{k=0}^{|B|}\Big(\big(p_{G,r}(x)\big)^{|B|-k}\big(p_{G,\emptyset}(x)-xp_{G,r}(x)\big)^{k}
f_{H_1,B_1,k}(x)\Big)\\
=\sum_{k=0}^{|B|}\Big(\big(p_{G,r}(x)\big)^{|B|-k}\big(p_{G,\emptyset}(x)-xp_{G,r}(x)\big)^{k}
f_{H_2,B_2,k}(x)\Big)=p_{\mathcal{G}_2}(x),
\end{multline*}
establishing the forward direction.

For the backward direction, we consider the family of graphs $G = K_{1,\ell}$ (stars) with the root $r$ being the vertex of degree $\ell$. A straightforward computation gives
\[
p_{G,r}(x)=(x-q)^\ell\quad\text{and}\quad
p_{G,\emptyset}(x)-xp_{G,r}(x)=\ell(x-q)^{\ell-1}(q^2-qx-1).
\]
This then gives
\[
\big(p_{G,r}(x)\big)^{|B|{-}k}\big(p_{G,\emptyset}(x){-}xp_{G,r}(x)\big)^{k}{=} 
(x-q)^{\ell|B|}\big(\underbrace{\ell(q^2-qx-1)/(x-q)}_{=Y}\big)^k=(x-q)^{\ell|B|-1}Y^k.
\]
We now have
\begin{equation}\label{eq:transcend}
0=p_{\mathcal{G}_1}(x)-p_{\mathcal{G}_2(x)}=\sum_{k}(x-q)^{\ell|B|-1}\big(f_{H_1,B_1,k}(x)-f_{H_2,B_2,k}(x)\big)Y^k.
\end{equation}
Fix a value $x$ so that $x\ne q$ and $x\ne (q^2-1)/q$. We can now treat \eqref{eq:transcend} as a polynomial expression in $Y$, which is $0$ infinitely often (choosing $\ell=1,2,3,\ldots$). This can only happen if the polynomial is identically $0$, which means that the coefficient of $Y^k$ is $0$ for all $k$, which further implies that $f_{H_1,B_1,k}(x)-f_{H_2,B_2,k}(x)=0$ for all $k$ for our fixed value of $x$. Since there are infinitely many choices available for $x$, that in turn implies that these polynomials always agree, which is possible only if they are equal, establishing the result. 
\end{proof}

The proof of Theorem~\ref{thm:necessary} shows that if we can consistently coalesce rooted stars into two graphs and maintain cospectrality, then we can coalesce \emph{any} rooted graph and maintain cospectrality. For the adjacency matrix, this can be simplified even further by noting that for stars, the pendent vertices act as twins and so have simple eigenvectors (e.g.\ assigning $1$ to one leaf and $-1$ to another), for the remaining eigenvalues we can then do equitable partitions where we have each individual vertex of $H$ along with groupings of the leaves of every coalesced star. From there, a simple similarity relationship shows that the remaining eigenvalues are found by replacing the coalescing of the stars with the coalescing of a single edge with \emph{edge weight} $\sqrt\ell$ (where we glued $K_{1,\ell}$). We summarize this in the following observation.

\begin{observation}
For the adjacency matrix, we have $(H_1,B_1)$ and $(H_2,B_2)$ are cospectral coalescent for arbitrary rooted graphs if and only if $(H_1,B_1)$ and $(H_2,B_2)$ are cospectral coalescent for the family $G=K_2$ with arbitrary edge weight.
\end{observation}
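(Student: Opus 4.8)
The plan is to turn the equitable-partition heuristic into a precise identity and then bootstrap through Theorem~\ref{thm:necessary}. Throughout $q=0$, so $L_0=A$. Fix a coalescent pair $(H,B)$ and a positive integer $\ell$; let $\mathcal{G}^{(\ell)}$ be the coalescing of $(H,B)$ with the star $K_{1,\ell}$ rooted at its center, and let $\mathcal{G}'_t$ be the coalescing of $(H,B)$ with the single edge $K_2$ whose edge has weight $t$ (rooted at either endpoint, with $A(\mathcal{G}'_t)$ the corresponding weighted adjacency matrix). The key claim is
\[
p_{\mathcal{G}^{(\ell)}}(x)=x^{(\ell-1)|B|}\,p_{\mathcal{G}'_{\sqrt\ell}}(x)\qquad(\ell=1,2,3,\dots).
\]
To prove it I partition $V(\mathcal{G}^{(\ell)})$ into the singletons $\{v\}$, $v\in V(H)$, together with one cell $L_b$ consisting of the $\ell$ leaves glued at $b$, for each $b\in B$; this partition is equitable for $A(\mathcal{G}^{(\ell)})$. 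On the orthogonal complement of the cell-constant subspace---vectors that vanish off $\bigcup_{b\in B}L_b$ and sum to zero on each $L_b$---the adjacency matrix acts as the zero operator, which accounts for the factor $x^{(\ell-1)|B|}$; on the cell-constant subspace it acts as the quotient matrix, whose symmetrization (rescaling each cell by the square root of its size) is precisely the adjacency matrix of $H$ with one pendant vertex attached to every $b\in B$ by an edge of weight $\sqrt\ell$, i.e.\ of $\mathcal{G}'_{\sqrt\ell}$. Alternatively one proves the identity from cycle decompositions: at each $b$ at most one glued leaf can belong to a cycle, the $\ell$ choices bundle into the factor $-\ell x^{\ell-1}$, the ``no leaf used'' option contributes $x^\ell$, and---this is exactly where $q=0$ is needed---a loop at $b$ has weight $q\deg(b)=0$ no matter how large the attached gadget is, so the gadget never interferes with the rest of the decomposition.

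Granting the identity, both directions are short. For ``only if'', suppose $(H_1,B_1)$ and $(H_2,B_2)$ are coalescing cospectral for all rooted graphs. Taking $G=K_1$ gives $p_{H_1}=p_{H_2}$, hence $|V(H_1)|=|V(H_2)|$, and then $G=K_2$ gives $|B_1|=|B_2|=:m$. Taking $G=K_{1,\ell}$ and dividing the identity by $x^{(\ell-1)m}$ yields $p_{\mathcal{G}'_{1,\sqrt\ell}}(x)=p_{\mathcal{G}'_{2,\sqrt\ell}}(x)$ for every positive integer $\ell$. Since a glued pendant has degree one and $q=0$, in any cycle decomposition of $\mathcal{G}'_{i,t}$ each such pendant is either unused or paired with its neighbor in an edge, so $p_{\mathcal{G}'_{i,t}}(x)$ is a polynomial in $t^2$; agreeing at $t^2=1,2,3,\dots$ forces it to agree for all $t$, which is coalescing cospectrality for the weighted $K_2$ family.

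For ``if'', suppose $p_{\mathcal{G}'_{1,t}}(x)=p_{\mathcal{G}'_{2,t}}(x)$ for all $t$. The coefficient of the top power $t^{2|B_i|}$ of $p_{\mathcal{G}'_{i,t}}(x)$ equals $(-1)^{|B_i|}p_{H_i,B_i}(x)$ (all pendants used), a monic nonzero polynomial in $x$, so the degree in $t$ recovers $|B_i|$; thus $|B_1|=|B_2|=:m$. Multiplying the identity by $x^{(\ell-1)m}$ gives $p_{\mathcal{G}_1^{(\ell)}}(x)=p_{\mathcal{G}_2^{(\ell)}}(x)$ for all $\ell$, i.e.\ coalescing cospectrality for every star $K_{1,\ell}$. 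As the backward direction of Theorem~\ref{thm:necessary} used only stars, this forces $f_{H_1,B_1,k}(x)=f_{H_2,B_2,k}(x)$ for all $k$, and Theorem~\ref{thm:necessary} then gives coalescing cospectrality for arbitrary rooted $G$.

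The main obstacle is the spectral identity: one must check the partition is equitable, show the action on the cell-orthogonal subspace is identically zero (so the spurious eigenvalues are all $0$), identify the symmetrized quotient with the weighted pendant graph, and---the conceptual point---isolate the single place where $q=0$ enters, namely that the diagonal loop weight $q\deg(v)$ at a coalescing vertex must be insensitive to the attached copy of $G$. Everything after the identity is degree counting and polynomial interpolation together with the already-established Theorem~\ref{thm:necessary}.
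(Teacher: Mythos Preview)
Your argument is correct and follows the same route the paper sketches in the paragraph preceding the Observation: reduce star-coalescing to weighted-edge-coalescing via the equitable partition on the leaf sets (giving the identity $p_{\mathcal{G}^{(\ell)}}(x)=x^{(\ell-1)|B|}p_{\mathcal{G}'_{\sqrt\ell}}(x)$), and then bootstrap through the star-based backward direction of Theorem~\ref{thm:necessary}. You have simply made rigorous what the paper leaves as a one-paragraph heuristic---in particular the $|B_1|=|B_2|$ extraction, the fact that $p_{\mathcal{G}'_{i,t}}(x)$ is a polynomial in $t^2$, and the interpolation step are all details the paper does not spell out.
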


This is similar to a result of Schwenk \cite{schwenk2}. We do not pursue this direction further here as our primary focus will be the characteristic polynomial and cycle decompositions. (The reason that this works well for the adjacency but not other matrices of the form $qD+A$ is because of what happens with the diagonal terms, in particular we would still add an edge of weight $\sqrt\ell$, but the degree terms on the diagonal would no longer agree with what is happening in the graph.)

\section{Coalescing on the complement}\label{sec:complement}
The goal of this section is to look at the relationships between the polynomial family $f_{H,B,k}(x)$ and the polynomial family $f_{H,V(H)\setminus B,k}(x)$. That is to say we are interested in examining the situation of coalescing on a set $B$ of vertices of $H$ and relating that to coalescing on the set $V(H)\setminus B$ of vertices of $H$.  The result of this section can be summarized as follows.

\begin{theorem}\label{thm:complement}
The polynomial family $f_{H,V(H)\setminus B,k}(x)$ can be determined from the polynomial family $f_{H,B,k}(x)$.
\end{theorem}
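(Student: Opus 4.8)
The plan is to express the family $f_{H,V(H)\setminus B,k}$ in terms of the family $f_{H,B,k}$ via an inclusion–exclusion relation over subsets, exploiting the fact that for a fixed $H$ the polynomials $p_{H,S}(x)$ for $S$ ranging over all subsets of $V(H)$ are a common "master list" from which both families are assembled by summing over subsets of prescribed size lying inside $B$ (respectively inside $V(H)\setminus B$). Concretely, set $n=|V(H)|$, $b=|B|$, and write $A=V(H)\setminus B$. I would introduce the more refined quantities
\[
g_{i,j}(x)=\sum_{\substack{S\subseteq V(H)\\ |S\cap B|=i,\ |S\cap A|=j}}p_{H,S}(x),
\]
so that $f_{H,B,k}=g_{k,0}$ (summing only over $S\subseteq B$ forces $j=0$) and $f_{H,V(H)\setminus B,k}=g_{0,k}$. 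The goal then becomes: recover all $g_{0,j}$ from the data $g_{i,0}$, $i=0,\dots,b$.

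**The bridge** is a generating-function / substitution identity. For a real (or formal) parameter, consider
\[
F_H(t,x)=\sum_{S\subseteq V(H)} t^{|S|}p_{H,S}(x).
\]
Deleting a vertex from the $L_q$ matrix and the standard "loop-or-not" split used in the cycle-decomposition discussion above gives a one-vertex recursion $p_{H,S}(x)$ in terms of $p_{H-v,\dots}$, but more usefully there is a clean closed relation at the level of $F_H$: I expect that attaching a pendant loop-like modification, or equivalently using Theorem~\ref{thm:charpoly} with the rooted graph $G$ taken to be a single vertex or a single edge, lets one evaluate $F_H(t,x)$ (and its two-variable refinement tracking $B$ versus $A$ separately) in terms of the characteristic polynomial of a modified graph. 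In fact the cleanest route is: apply Theorem~\ref{thm:charpoly} (or rather Theorem~\ref{thm:necessary}'s computation) with $G=K_{1,\ell}$ attached to all of $V(H)$, which already packages $\sum_S p_{H,S}$ weighted by powers of $Y$ — i.e.\ it computes $\sum_k f_{H,V(H),k}(x)Y^k$. Doing the same with $B$ and with $V(H)\setminus B$ and comparing shows each family determines a specific polynomial, and the key point is that knowing $f_{H,B,k}$ for all $k$ is equivalent to knowing $p_H$ together with the "$B$-restricted deletion generating function", and an inclusion–exclusion over which coalesced copies are present converts $B$-data into $A$-data.

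**The concrete key step** I would actually carry out: show that
\[
f_{H,V(H)\setminus B,k}(x)=\sum_{j=0}^{k}(-1)^{\,k-j}\binom{n-b-j}{\,k-j\,}\Big(\text{something built from }f_{H,B,\bullet}\text{ and }p_{H,\bullet}\Big),
\]
or more honestly, establish the two-step chain: (i) $f_{H,B,k}$ for all $k$ determines the "total" family $f_{H,V(H),m}(x)=\sum_{S\subseteq V(H),|S|=m}p_{H,S}(x)$ together with enough refined information — this uses that $f_{H,V(H),m}=\sum_{i}\binom{?}{?}$-type convolution is invertible; and (ii) symmetrically, $f_{H,V(H),m}$ together with $f_{H,B,k}$ determines $f_{H,V(H)\setminus B,k}$ by the Vandermonde-type identity $\sum_{S,|S|=k}p_{H,S} = \sum_{i+j=k}(\text{$B$-part of size }i)(\text{$A$-part of size }j)$ — but these products aren't literal products of polynomials, so what one really has is the linear system $g_{i,0}$ known, $\sum_{i+j=m} g_{i,j}$ known for each $m$ (that's $f_{H,V(H),m}$), and one wants $g_{0,j}$. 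That underdetermined-looking system is resolved because the $g_{i,j}$ are not independent: I would prove a recursion lowering a $B$-index in exchange for an $A$-index, coming from the single-vertex coalescing formula applied at one vertex of $A$ versus one vertex of $B$, which over-determines the system and pins down $g_{0,j}$ from $g_{\bullet,0}$.

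**The main obstacle** will be exactly this: the families $f_{H,B,k}$ are sums, not products, so one cannot naively "divide" to isolate the $A$-contribution; the whole content is finding the right linear-algebraic or generating-function identity that makes the passage $B\to V(H)\setminus B$ invertible. I expect the resolution to come from evaluating the coalescing characteristic polynomial of Theorem~\ref{thm:charpoly} for a two-parameter family of rooted graphs (e.g.\ different stars on $B$ versus on $A$, or the same star but comparing the $B$, the $A$, and the full $V(H)$ instantiations) and reading off, by the polynomial-in-$Y$ argument of Theorem~\ref{thm:necessary}, that the coefficient data transforms under an explicit invertible triangular substitution. Once that substitution is identified, the proof is a short computation; locating it is the work.
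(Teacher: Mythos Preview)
Your setup is sensible, but the proposal never closes the gap you yourself flag: with your refinement $g_{i,j}=\sum_{|S\cap B|=i,\,|S\cap A|=j}p_{H,S}$, the data $g_{\bullet,0}$ really is too coarse to recover $g_{0,\bullet}$, and none of the suggested fixes supplies the missing relation. You cannot compute $f_{H,V(H),m}=\sum_{i+j=m}g_{i,j}$ from the $g_{i,0}$ alone (that sum already contains the unknown mixed terms), there is no single-vertex ``index-swapping'' recursion on the $g_{i,j}$, and attaching stars to $B$, to $A$, or to all of $V(H)$ only repackages the same unknowns. The honest sentence ``locating it is the work'' is exactly right: the key identity is not here.

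What the paper does differently is refine by \emph{cycle decompositions} $C$ rather than by subsets $S$. A fixed $C$ contributes to $p_{H,S}$ for \emph{every} $S$ disjoint from $C$, so if $|C\cap B|=i$ and $|C\cap A|=j$ then $C$ appears in $f_{H,B,k}$ with multiplicity $\binom{|B|-i}{k}$, and crucially the power of $x$ it carries is determined by $|C|=i+j$. Writing $\omega_{i,j}$ for the total signed weight of all such $C$, the coefficient $[x^{n-\ell}]f_{H,B,k}$ becomes a finite linear combination of the $\omega_{i,j}$ with $i+j$ fixed; the coefficient matrix is a slice of Pascal's triangle, $\bigl(\binom{|B|-i}{k}\bigr)$, which is shown to be invertible by iterated column differences. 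Hence all $\omega_{i,j}$ are determined by the family $f_{H,B,k}$, and then $f_{H,V(H)\setminus B,k}=\sum_{i,j}\binom{n-|B|-j}{k}\,\omega_{i,j}x^{\cdots}$ follows by symmetry. The point your $g_{i,j}$ miss is that each $\omega_{i,j}$ is a single scalar sitting at a single power of $x$, so the degree grading makes the linear system square; your $g_{i,j}$ are full polynomials whose many coefficients are not linked to one another in any usable way.
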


\begin{proof}
Since the individual characteristic polynomials are found by taking combinations of weights of cycle decompositions, we first describe the polynomials in terms of decompositions. We start by writing
\[
f_{H,B,k}(x) = 
\sum_{\substack{S\subseteq B\\|S|=k}}p_{H,S}(x) = 
\sum_{\substack{S\subseteq B\\|S|=k}}\sum_{\substack{C\in\mathcal{C}\\C\cap S = \emptyset}}\wt(C).
\]
For a fixed cycle decomposition $C$, the number of times that $\wt(C)$ shows up on the right will correspond with how many subsets $S$ can be chosen that avoids $C$. To determine this, it is useful to further refine cycle decompositions by how much they intersect with $B$ and with $V(G)\setminus B$. So, we introduce $w(s,t)$ defined for a graph $H$ as follows
\[
w(i,j)=\sum_{\substack{C\in\mathcal{C}\\|C\cap B| = i \\ |C\cap (V(H)\setminus B)|=j}}\wt(C).
\]
By convention, $w(i,j)=0$ if $i>|B|$, $j>|V(H)|-|B|$, $i<0$, or $j<0$.

Suppose that $C$ is a cycle decomposition that contributes to $w(i,j)$. Then the total number of times it will contribute to $f_{H,B,k}(x)$ will be $\binom{|B|-i}{k}$, where $|B|-i$ are the number of vertices of $B$ not in $C$, and we must choose $S$ as a subset of size $k$ from among them. Since this is true for every cycle decomposition that contributes to $w(i,j)$, and each cycle decomposition shows up in \emph{some} $w(i,j)$, we have
\[
f_{H,B,k}(x) = \sum_i\sum_j\binom{|B|-i}kw(i,j).
\]

We could alternatively write our polynomials as 
\[
f_{H,B,k}(x)=\sum_\ell c_{k,\ell}x^{|V(H)|-\ell}
\]
for appropriate constant coefficients $c_{k,\ell}$. To connect these, we see for any cycle which contributes to $w(i,j)$ that the cycle decomposition uses $i+j$ vertices among cycles and the remaining $|V(H)|-i-j$ vertices are unused. That means that $w(i,j)=\omega_{i,j}x^{|V(H)|-i-j}$ for some constant $\omega_{i,j}$. Putting this together we have
\[
\sum_\ell c_{k,\ell}x^{|V(H)|-\ell}=
\sum_i\sum_j\binom{|B|-i}k\omega_{i,j}x^{|V(H)|-i-j}=\sum_\ell\bigg(\sum_i\binom{|B|-i}k\omega_{i,\ell-i}\bigg)x^{|V(H)|-\ell}
\]
and so
\[
c_{k,\ell}=\sum_i\binom{|B|-i}k\omega_{i,\ell-i}.
\]

We now gather these equations into systems where we fix $\ell$ and let $k$ vary. We will consider two cases, namely when $\ell<|B|$ and $\ell\ge |B|$. When $\ell<|B|$, we have the following system of equations
\[
\begin{pmatrix}
c_{0,\ell}\\
c_{1,\ell-1}\\
\vdots\\
c_{\ell,0}
\end{pmatrix}=
\begin{pmatrix}
\binom{|B|-0}{0}&\binom{|B|-1}{0}&\cdots&\binom{|B|-\ell}{0}\\
\binom{|B|-0}{1}&\binom{|B|-1}{1}&\cdots&\binom{|B|-\ell}{1}\\
\vdots&\vdots&\ddots&\vdots\\
\binom{|B|-0}{\ell}&\binom{|B|-1}{\ell}&\cdots&\binom{|B|-\ell}{\ell}
\end{pmatrix}
\begin{pmatrix}
\omega_{0,\ell}\\
\omega_{1,\ell-1}\\
\vdots\\
\omega_{\ell,0}
\end{pmatrix}.
\]
We now claim that the matrix on the right side is invertible. Given that this is the case, this allows us to solve for $\omega_{i,j}$ with $i+j<|B|$ in terms of the $c_{i,j}$ by multiplying both sides by the inverse.

To verify our claim, we will show that the matrix has determinant $\pm1$. Start with the given matrix, and for each column but the last, subtract the next column from itself. This operation preserves the determinant, and using $\binom{a}{b}-\binom{a-1}{b}=\binom{a-1}{b-1}$ the result is the matrix
\[
\left(\begin{array}{cccc|c}
0&0&\cdots&0&\binom{|B|-\ell}{0}\\[5pt] \hline &&&&\\[-8pt]
\binom{|B|-1}{0}&\binom{|B|-2}{0}&\cdots&\binom{|B|-\ell}{0}&\binom{|B|-\ell}{1}\\
\binom{|B|-1}{1}&\binom{|B|-2}{1}&\cdots&\binom{|B|-\ell}{1}&\binom{|B|-\ell}{2}\\
\vdots&\vdots&\ddots&\vdots&\vdots\\
\binom{|B|-1}{\ell-1}&\binom{|B|-2}{\ell-1}&\cdots&\binom{|B|-\ell}{\ell-1}&\binom{|B|-\ell}{\ell}
\end{array}\right).
\]
The lower left sub-matrix is equivalent to a portion of the original matrix (in particular what comes from deleting the first column and last row). So we can carry out this operation repeatedly with the resulting lower left submatrix until we reduce to a matrix with $1$ on the anti-diagonal and $0$s above the anti-diagonal, which has determinant $\pm1$.

Now we consider the case when $\ell\ge |B|$. We note the polynomial family from which the coefficients come from only go up through $k=|B|$, and at the same time recall that $w_{i,j}=0$ for $i>|B|$ (since we cannot have more than $|B|$ vertices in a cycle decomposition $C$ intersecting $B$). So in this case we have
\[
\begin{pmatrix}
c_{0,\ell}\\
c_{1,\ell-1}\\
\vdots\\
c_{|B|,\ell-|B|}
\end{pmatrix}=
\begin{pmatrix}
\binom{|B|-0}{0}&\binom{|B|-1}{0}&\cdots&\binom{|B|-|B|}{0}\\
\binom{|B|-0}{1}&\binom{|B|-1}{1}&\cdots&\binom{|B|-|B|}{1}\\
\vdots&\vdots&\ddots&\vdots\\
\binom{|B|-0}{|B|}&\binom{|B|-1}{|B|}&\cdots&\binom{|B|-|B|}{|B|}
\end{pmatrix}
\begin{pmatrix}
\omega_{0,\ell}\\
\omega_{1,\ell-1}\\
\vdots\\
\omega_{|B|,\ell-|B|}
\end{pmatrix}.
\]
We can directly see that the matrix is invertible as the diagonal entries are $1$ and the entries below the diagonal are all $0$, and so the matrix has determinant $1$. Therefore, we are again able to solve for $\omega_{i,j}$ with $i+j\ge |B|$ in terms of the $c_{i,j}$.

So summarizing what we have done, we started with the polynomial family $f_{H,B,k}(x)$ and then rewrote that in terms of combinations of the $w(i,j)$. We then showed that given all of the coefficients of $f_{H,B,k}(x)$ that all of the $w(i,j)$ could be determined (we already knew their power of $x$ and then we determined the $\omega_{i,j}$ which was the scaling factor). To finish the argument, it now suffices to show how to compute the polynomial family $f_{H,V(H)\setminus B,k}(x)$ in terms of $w(i,j)$. Given the symmetric nature of the definition of $w(i,j)$ this is readily done as before, and we have
\[
f_{H,V(H)\setminus B,k}(x) = \sum_i\sum_j\binom{|V(H)|-|B|-j}kw(i,j).\qedhere
\]
\end{proof}

We are now ready to prove our main result.

\begin{proof}[Proof of Theorem~\ref{thm:main}]
By Theorem~\ref{thm:necessary} we have that $(H_1,B_1)$ and $(H_2,B_2)$ coalescing cospectral if and only if $f_{H_1,B_1,k}(x)=f_{H_2,B_2,k}(x)$ for all $k$. Now we can apply Theorem~\ref{thm:complement}  and conclude that $f_{H_1,V(H_1)\setminus B_1,k}(x)=f_{H_2,V(H_2)\setminus B_2,k}(x)$ for all $k$ (since they are both derived from the same polynomial family). We finally use Theorem~\ref{thm:necessary} again (in the other direction) and conclude that $(H_1,V(H_1)\setminus B_1)$ and $(H_2,V(H_2)\setminus B_2)$ are coalescing cospectral.
\end{proof}

\section{Unions of coalescing sets}\label{sec:unions}
Theorem~\ref{thm:main} shows that if we have $(H_1,B_1)$ and $(H_2,B_2)$ are coalescing cospectral, then we can find another coalescing cospectral pair by looking at the complements of the sets. A natural question arises if there are other set operations that we can perform that lead to other coalescing cospectral pairs. In this section we will demonstrate some of the challenges that can arise by considering unions.

To begin with, it is \emph{not} the case that the (disjoint) union of coalescing cospectral pairs become coalescing cospectral; this means that unions are not guaranteed to produce results which remain coalescing cospectral. An example of what can happen is given in Figure~\ref{fig:no_unions}.

\begin{figure}[!htb]
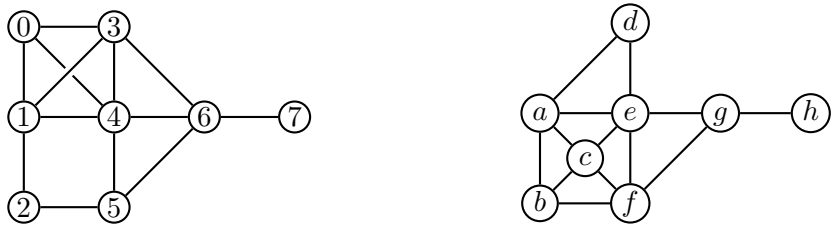

\centering
\FIGNOUNIONS
\caption{For $L_0=A$, we have $(H_1,\{1\})$ and $(H_2,\{a\})$ are coalescing cospectral; $(H_1,\{0,6\})$ and $(H_2,\{c,g\})$ are coalescing cospectral; but $(H_1,\{0,1,6\})$ and $(H_2,\{a,c,g\})$ are \emph{not} coalescing cospectral.}
\label{fig:no_unions}
\end{figure}

The example in Figure~\ref{fig:no_unions} is a demonstration that coalescing cospectral pairs do not have to remain as coalescing cospectral pairs after a coalescence has occurred somewhere else in the graph. This is also tied to the idea of simultaneously coalescing two different graphs onto two different coalescing sets. We have the following result in this direction.

\begin{theorem}\label{thm:twostep}
Fix a matrix $L_q$. If $(H_1,B_1)$ and $(H_2,B_2)$ are coalescing cospectral, $(H_1,\{v_1\})$ and $(H_2,\{v_2\})$ are coalescing cospectral, and $(H_1,B_1\cup\{v_1\})$ and $(H_2,B_2\cup\{v_2\})$ are coalescing cospectral, then the graphs formed by coalescing $G$ rooted at $r$ onto the sets $B_1$ and $B_2$ and also coalescing $\widehat{G}$ rooted at $\widehat{r}$ onto the vertices $v_1$ and $v_2$ will also be cospectral.
\end{theorem}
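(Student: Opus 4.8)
The plan is to realize the two-step construction as a single coalescing of $G$ followed by one more application of the single-vertex gluing identity established within the proof of Theorem~\ref{thm:charpoly}, and then to observe that the only graph-dependent information that survives is the families $f_{H,B,k}$ together with the ``pointed'' families
\[
g_{H,B,k}^{(v)}(x)=\sum_{\substack{S\subseteq B\\ |S|=k}}p_{H,S\cup\{v\}}(x),\qquad v\notin B.
\]
Write $b=|B_1|=|B_2|$ (the sizes agree by hypothesis~(1), since $f_{H,B,k}$ is nonzero precisely for $0\le k\le |B|$), assume $v_i\notin B_i$ as the notation $B_i\cup\{v_i\}$ intends, let $\mathcal{G}_i'$ be the graph obtained by coalescing $G$ (rooted at $r$) onto $B_i$ in $H_i$, and let $\mathcal{G}_i$ be obtained from $\mathcal{G}_i'$ by coalescing one copy of $\widehat{G}$ (rooted at $\widehat{r}$) at $v_i$. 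Because $v_i$ is a cut vertex of $\mathcal{G}_i$ separating that copy of $\widehat{G}$ from the rest, the single-vertex identity gives
\[
p_{\mathcal{G}_i}(x)=p_{\widehat{G},\widehat{r}}(x)\,p_{\mathcal{G}_i'}(x)+\big(p_{\widehat{G},\emptyset}(x)-x\,p_{\widehat{G},\widehat{r}}(x)\big)\,p_{\mathcal{G}_i',v_i}(x).
\]

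Next I would compute the two ingredients with Theorem~\ref{thm:charpoly}: taking $T=\emptyset$ yields $p_{\mathcal{G}_i'}=\sum_{k}(p_{G,r})^{b-k}(p_{G,\emptyset}-xp_{G,r})^{k}f_{H_i,B_i,k}$, and taking $T=\{v_i\}$ (legitimate since $v_i\notin B_i$) yields $p_{\mathcal{G}_i',v_i}=\sum_{k}(p_{G,r})^{b-k}(p_{G,\emptyset}-xp_{G,r})^{k}g_{H_i,B_i,k}^{(v_i)}$. Substituting into the display above expresses $p_{\mathcal{G}_i}$ purely in terms of $p_G$- and $p_{\widehat{G}}$-data (identical for $i=1,2$) and the two families $f_{H_i,B_i,\cdot}$ and $g_{H_i,B_i,\cdot}^{(v_i)}$. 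So it remains to check that both families agree between $i=1$ and $i=2$.

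For $f_{H_i,B_i,k}$ this is hypothesis~(1) via Theorem~\ref{thm:necessary}. For the pointed family I would use the elementary identity, obtained by sorting the $(k{+}1)$-subsets of $B\cup\{v\}$ according to whether they contain $v$,
\[
g_{H,B,k}^{(v)}(x)=f_{H,B\cup\{v\},k+1}(x)-f_{H,B,k+1}(x),
\]
so that equality of the pointed families follows by combining hypotheses~(1) and~(3) (again through Theorem~\ref{thm:necessary}); note that hypothesis~(2) is simply the $k=0$ instance of this and is not needed separately. Therefore $p_{\mathcal{G}_1}=p_{\mathcal{G}_2}$, which is the claim.

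I do not expect a serious obstacle, since once Theorem~\ref{thm:charpoly} and its single-vertex specialization are available the argument is bookkeeping. The step that needs care is recognizing that $p_{\mathcal{G}_i',v_i}$ — deleting the vertex $v_i$, which lies \emph{outside} the coalescing set $B_i$ — is exactly the $T=\{v_i\}$ case of Theorem~\ref{thm:charpoly}, and is therefore controlled by the family $g_{H_i,B_i,\cdot}^{(v_i)}$; the combinatorial identity above is then what turns the ``enlarged-set'' hypothesis~(3) into the fact that these pointed families coincide.
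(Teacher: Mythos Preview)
Your argument is correct and essentially matches the paper's proof: both reduce the claim to checking that the families $f_{H_i,B_i,k}$ and the ``pointed'' families $\sum_{|S|=k}p_{H_i,S\cup\{v_i\}}$ agree, and both establish the latter via the identity $g_{H,B,k}^{(v)}=f_{H,B\cup\{v\},k+1}-f_{H,B,k+1}$. The only organizational difference is that the paper first sets up a general double-indexed family $p_{H,B,B',k,\ell}$ for two simultaneous coalescing sets and then checks the cases $\ell=0$, $k=0$, and $(\ell=1,k>0)$, whereas you factor the construction as one single-vertex gluing on top of a $B$-coalescing and invoke the $T=\{v_i\}$ case of Theorem~\ref{thm:charpoly} directly; these are the same computation unpacked in different orders. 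Your observation that hypothesis~(2) is already implied by hypotheses~(1) and~(3) (since $p_{H_i,\{v_i\}}=f_{H_i,B_i\cup\{v_i\},1}-f_{H_i,B_i,1}$) is a genuine sharpening that the paper does not make explicit.
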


For the special case $B_i=V(H_i)\setminus\{v_i\}$ we have the following which is a strengthening of a result of Schwenk \cite{schwenk} by noting we could also glue arbitrary graphs into the ``non-root'' vertices.

\begin{corollary}
If two rooted graphs $H_1$ and $H_2$ are cospectral with respect to $L_q$ and the matrix of $H_1$ after deleting the row/column corresponding to $v_1$ is cospectral with the matrix of $H_2$ after deleting the row/column corresponding to $v_2$, then we can coalesce the same arbitrary graph onto $v_1$ and $v_2$ and a different arbitrary graph on all remaining vertices and the resulting graphs will be cospectral.
\end{corollary}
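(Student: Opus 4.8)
The plan is to obtain this as the special case $B_i = V(H_i)\setminus\{v_i\}$ of Theorem~\ref{thm:twostep}, taking $\widehat G$ (rooted at $\widehat r$) to be the graph coalesced onto $v_1$ and $v_2$, and $G$ (rooted at $r$) to be the graph coalesced onto all remaining vertices. So the entire task reduces to verifying the three hypotheses of Theorem~\ref{thm:twostep}: that $(H_1,B_1)$ and $(H_2,B_2)$ are coalescing cospectral, that $(H_1,\{v_1\})$ and $(H_2,\{v_2\})$ are coalescing cospectral, and that $(H_1,B_1\cup\{v_1\})$ and $(H_2,B_2\cup\{v_2\})$ are coalescing cospectral. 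Throughout I would translate statements about coalescing cospectrality into equalities of the polynomial families $f_{H,B,k}$ via Theorem~\ref{thm:necessary}, and use that, since the $L_q$ matrices are real symmetric, ``cospectral'' is the same as ``equal characteristic polynomial'', so the corollary's hypotheses say precisely $p_{H_1}(x)=p_{H_2}(x)$ and $p_{H_1,v_1}(x)=p_{H_2,v_2}(x)$.

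For the second hypothesis I would just observe that $f_{H,\{v\},0}=p_{H,\emptyset}=p_H$ and $f_{H,\{v\},1}=p_{H,v}$ (with $f_{H,\{v\},k}=0$ for $k\ge 2$), so by Theorem~\ref{thm:necessary} the pair $(H_1,\{v_1\})$ is coalescing cospectral with $(H_2,\{v_2\})$ exactly because $p_{H_1}=p_{H_2}$ and $p_{H_1,v_1}=p_{H_2,v_2}$ — which are the corollary's standing hypotheses. The first hypothesis then comes for free: since $B_i=V(H_i)\setminus\{v_i\}$ is the complement of $\{v_i\}$ inside $V(H_i)$, applying Theorem~\ref{thm:main} to the coalescing cospectral pairs $(H_1,\{v_1\})$ and $(H_2,\{v_2\})$ immediately yields that $(H_1,V(H_1)\setminus\{v_1\})$ and $(H_2,V(H_2)\setminus\{v_2\})$ are coalescing cospectral.

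For the third hypothesis, note $B_i\cup\{v_i\}=V(H_i)$, which is the complement of $\emptyset$. Since $f_{H,\emptyset,0}=p_H$ and $f_{H,\emptyset,k}=0$ for $k\ge 1$, Theorem~\ref{thm:necessary} shows $(H_1,\emptyset)$ and $(H_2,\emptyset)$ are coalescing cospectral precisely when $p_{H_1}=p_{H_2}$, which holds. Applying Theorem~\ref{thm:main} once more gives that $(H_1,V(H_1))$ and $(H_2,V(H_2))$ are coalescing cospectral. With all three hypotheses of Theorem~\ref{thm:twostep} verified, its conclusion is exactly the statement of the corollary.

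I do not expect a genuine obstacle here: the argument is bookkeeping once Theorems~\ref{thm:necessary}, \ref{thm:main}, and~\ref{thm:twostep} are in hand. The two points that need care are (i) confirming that the degenerate families $f_{H,\{v\},k}$ and $f_{H,\emptyset,k}$ carry exactly the data $\{p_H,p_{H,v}\}$ and $\{p_H\}$ respectively, so that the corollary's two hypotheses are genuinely equivalent to the coalescing-cospectrality facts needed; and (ii) matching the roles of $G$ and $\widehat G$ in Theorem~\ref{thm:twostep} with ``a different arbitrary graph on all remaining vertices'' and ``the same arbitrary graph onto $v_1$ and $v_2$''. The remark that this strengthens Schwenk's result is then just the observation that his setting is recovered by letting $G$ be a single rooted vertex, i.e.\ coalescing nothing onto the non-root vertices.
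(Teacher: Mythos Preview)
Your proposal is correct and matches the paper's approach exactly: the paper presents this corollary as the special case $B_i=V(H_i)\setminus\{v_i\}$ of Theorem~\ref{thm:twostep} without further argument, and you have simply filled in the verification of the three hypotheses (via Theorems~\ref{thm:necessary} and~\ref{thm:main}) that the paper leaves implicit.
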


\begin{proof}[(Sketch of the) Proof of Theorem~\ref{thm:twostep}]
We begin by looking at what happens generally when we coalesce $G$ onto the vertices of $B$ and $G'$ onto the vertices of $B'$. Applying Theorem~\ref{thm:charpoly} we have
\[
p_{\mathcal{G}}(x)=\sum_{k,\ell}\big(\text{terms from $G$ and $\widehat{G}$}\big)\boxed{\,\sum_{\substack{S\subseteq B, |S|=k\\
T\subseteq B', |T|=\ell}}p_{H,S\cup T}(x)\,}\,.
\]
Denoting the boxed polynomial as $p_{H,B,B',k,\ell}(x)$ then by the same argument as Theorem~\ref{thm:necessary} we have that for two graphs they will be cospectral when we coalesce in all possible ways if and only if $p_{H_1,B_1,B'_1,k,\ell}(x)=p_{H_2,B_2,B'_2,k,\ell}(x)$ for all values of $k$ and $\ell$. (Similar statements hold for gluing into three or more parts.) On a side note, for general $B$ and $B'$, the number of conditions needed to be satisfied grows as the product of their sizes making it impractical to check for most cases.

Specifically, applying this to our case we must verify that for all $k,\ell$ that
\[
\sum_{\substack{S\subseteq B_1, |S|=k\\
T\subseteq \{v_1\}, |T|=\ell}}p_{H_1,S\cup T}(x)
=
\sum_{\substack{S\subseteq B_2, |S|=k\\
T\subseteq \{v_2\}, |T|=\ell}}p_{H_2,S\cup T}(x).
\]
We now check to see how to use our assumptions to verify that our collection of conditions are satisfied. We have several cases:
\begin{itemize}
\item For $\ell=0$ the equations reduce to showing $\sum p_{H_1,S}(x)=\sum p_{H_2,S}(x)$ which holds since $(H_1,B_1)$ and $(H_2,B_2)$ are coalescing cospectral.
\item For $k=0$ the equations reduce to showing $\sum p_{H_1,T}(x)=\sum p_{H_2,T}(x)$ which holds since $(H_1,\{v_1\})$ and $(H_2,\{v_2\})$ are coalescing cospectral.
\item For $\ell=1$ and $k>0$ the equations reduce to showing
\[
\sum_{\substack{S\subseteq B_1\\|S|=k}}p_{H_1,S\cup\{v_1\}}(x)=
\sum_{\substack{S\subseteq B_2\\|S|=k}}p_{H_2,S\cup\{v_2\}}(x).
\]
The key idea is to see that we can rewrite this as a combination and in particular this is equivalent to
\[
\sum_{\substack{S\subseteq B_1\cup\{v_1\}\\|S|=k+1}}p_{H_1,S}(x)
-\sum_{\substack{S\subseteq B_1\\ |S|=k+1}}p_{H_1,S}(x)=
\sum_{\substack{S\subseteq B_2\cup\{v_2\}\\ |S|=k+1}}p_{H_2,S}(x)
-\sum_{\substack{S\subseteq B_2\\ |S|=k+1}}p_{H_2,S}(x).
\]
(Find all the $k+1$ element subsets and then remove those missing $v_1$.) The first terms match since $(H_1,B_1\cup\{v_1\})$ and $(H_2,B_2\cup\{v_2\})$ are coalescing cospectral, while the second terms match since $(H_1,B_1)$ and $(H_2,B_2)$ are coalescing cospectral.
\end{itemize}
Since all cases are satisfied the result follows.
\end{proof}

While having one of the sets consist of a single vertex seems very limited, it is also best possible as the example shown in Figure~\ref{fig:2+2} demonstrates. In general, coalescing sets, at least in some cases, seem to be sensitive to coalescings happening elsewhere in the graph. There still remains much that is not known about coalescing sets and various graph operations.

\begin{figure}[!htb]
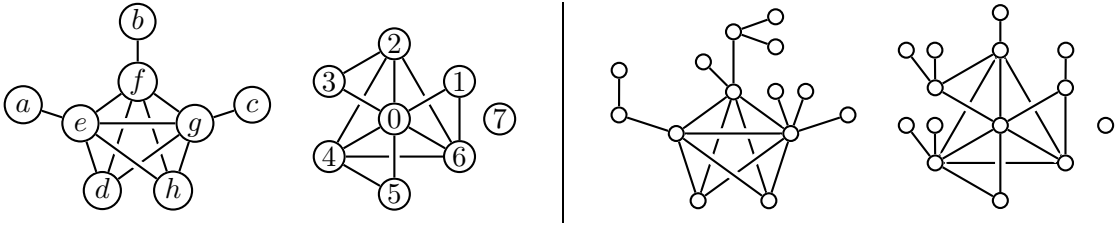

\centering
\FIGNOTWO
\caption{For $L_0$ (the adjacency matrix) we have  $(H_1,\{a,f\}$) and $(H_2,\{1,2\})$ are coalescing cospectral, $(H_1,\{b,g\}$) and $(H_2,\{3,4\})$ are coalescing cospectral, and $(H_1,\{a,b,f,g\}$) and $(H_2,\{1,2,3,4\})$ are coalescing cospectral. However, the graphs resulting from gluing in a $K_{1,1}$ into the first coalescing pairs and a $K_{1,2}$ into the second coalescing pairs (shown on the right) are \emph{not} cospectral.}
\label{fig:2+2}
\end{figure}

\section{Examples and additional remarks}\label{sec:examples}

One immediate question that arises is how general the phenomenon of coalescing cospectral pairs is for $L_q$. Corollary~\ref{cor:emptyglue} uses the special case when the coalescing set is empty (so that every cospectral pair has at least two such sets, empty and all; some graphs have only these). For many small graphs there are multiple coalescing sets available as illustrated in Figure~\ref{fig:0_example} ($L_0$, adjacency), Figure~\ref{fig:-1_example} ($L_{-1}$, Laplacian), Figure~\ref{fig:1_example} ($L_1$, signless Laplacian), and Figure~\ref{fig:1/2_example} ($L_{1/2}=\frac12D+A$). Throughout this section, we will adapt an abbreviated notation so, for example, $12{\,:\,}bd$ will mean that $(H_1,\{1,2\})$ is coalescing cospectral with $(H_2,\{b,d\})$ (where $H_1$ and $H_2$ can be determined from context by how vertices are labeled).

\begin{figure}[!htb]
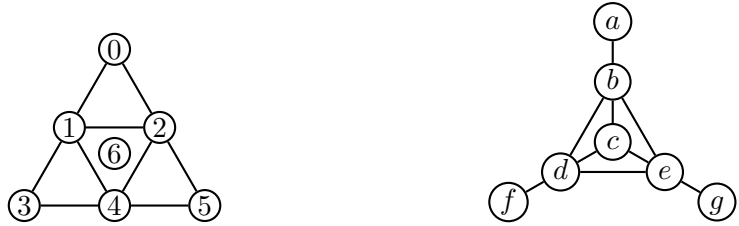

\centering
\FIGADJACENCY
\caption{Two graphs which have the following coalescing cospectral sets for $L_0=A$ (the adjacency) up to symmetry and taking complements: $\emptyset{\,:\,}\emptyset$, $1{\,:\,}b$, $12{\,:\,}bd$, $124{\,:\,}bde$.}
\label{fig:0_example}
\end{figure}

\begin{figure}[!htb]
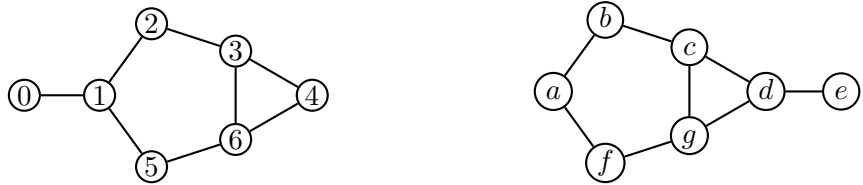

\centering
\FIGLAPLACIAN
\caption{Two graphs which have the following coalescing cospectral sets for $L_{-1}=-D+A$ (the Laplacian) up to symmetry and taking complements: $\emptyset{\,:\,}\emptyset$, $0{\,:\,}e$, $1{\,:\,}d$, $3{\,:\,}d$, $4{\,:\,}a$, $01{\,:\,}de$, $04{\,:\,}ae$, $14{\,:\,}ad$, $014{\,:\,}ade$.}
\label{fig:-1_example}
\end{figure}

\begin{figure}[!htb]
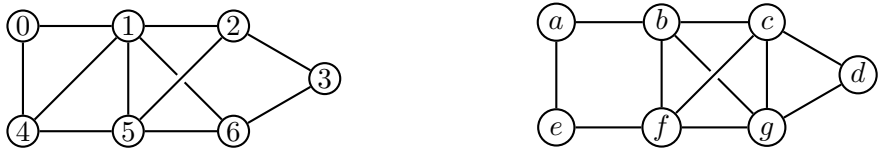

\centering
\FIGSIGNLESS
\caption{Two graphs which have the following coalescing cospectral sets for $L_1=D+A$ (the signless Laplacian) up to symmetry and taking complements: $\emptyset{\,:\,}\emptyset$, $0{\,:\,}d$, $3{\,:\,}a$, $5{\,:\,}f$, $03{\,:\,}ad$, $05{\,:\,}df$, $35{\,:\,}af$, $035{\,:\,}adf$.}
\label{fig:1_example}
\end{figure}

\begin{figure}[!htb]
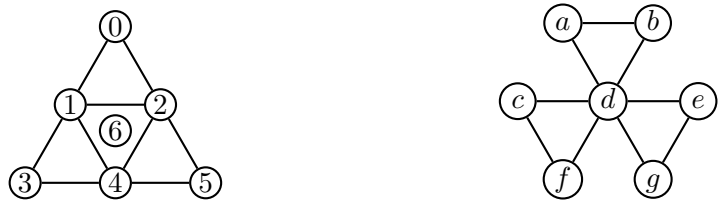

\centering
\FIGHALFEXAMPLE
\caption{Two graphs which have the following coalescing cospectral sets for $L_{1/2}=\frac12D+A$ up to symmetry and taking complements: $\emptyset{\,:\,}\emptyset$, $0{\,:\,}a$, $03{\,:\,}ac$, $035{\,:\,}ace$.}
\label{fig:1/2_example}
\end{figure}

\subsection*{An application}
This paper grew out of looking at examples of trees which were cospectral with non-trees (see \cite{faux}). In particular, for the signless, Laplacian the following was established.

\begin{theorem}[Butler et al.\ \cite{faux}]\label{thm:4k}
A tree $T$ can only be cospectral with a non-tree $G$ if and only if the number of vertices is $n=4k$.
\end{theorem}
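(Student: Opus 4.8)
# Proof Proposal for Theorem 5.3 (Butler et al., the $n = 4k$ characterization)

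The plan is to exploit the theory of $L_1$ (signless Laplacian) cycle decompositions developed in this paper, together with the fact that for a connected graph the signless Laplacian is singular if and only if the graph is bipartite. First I would recall that for any graph $H$ on $n$ vertices, the constant term of $p_H(x) = \det(xI - L_1)$ equals $(-1)^n \det(L_1)$, and a classical result states $\det(L_1) = 4 \cdot (\text{number of spanning structures counted with signs})$ — more usefully, $\det(L_1) \equiv 0$ precisely when $H$ is bipartite and otherwise $\det(L_1)$ counts, up to sign, $4$ times a combinatorial quantity. For a tree $T$ (which is bipartite), $\det(L_1) = 0$, so $x \mid p_T(x)$. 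Thus if a non-tree $G$ is $L_1$-cospectral with $T$, then $G$ must also be bipartite (else its constant term is nonzero), and $G$ must have exactly the same number of edges as $T$, namely $n - 1$ — but a connected bipartite graph with $n-1$ edges is a tree, so $G$ must be \emph{disconnected}, with each component having a deficiency that sums correctly.

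Next I would analyze the coefficient structure via cycle decompositions. Writing $p_H(x) = \sum_{C \in \mathcal{C}} \wt(C)$ with $\wt(C) = x^{u(C)} 2^{\lng(C)} (-1)^{\cy(C)} \prod_{v \in \loops(C)} \deg_H(v)$ (taking $q = 1$), the low-order coefficients of $p_H$ encode edge and triangle counts. In particular, comparing the coefficient of $x^{n-1}$ gives $\sum_v \deg_H(v) = 2|E(H)|$, so $T$ and $G$ have the same number of edges; comparing the coefficient of $x^{n-2}$ and using that $G$ is bipartite (no triangles, so the only length-$\geq 3$ cycles contributing are $4$-cycles at higher order) pins down $\sum_v \deg_H(v)^2$ minus a multiple of the edge count, forcing the degree sequences to have matching second moment. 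Combining ``same number of edges,'' ``bipartite,'' ``disconnected with $n - 1$ edges total,'' one sees $G$ is a forest that is \emph{not} a tree, i.e. $G$ has at least two components. The heart of the argument is then a parity/divisibility constraint: for a forest with $c$ components on $n$ vertices, $\det(L_1) = 0$ automatically, so that is not the obstruction; instead one tracks the first nonvanishing coefficient of $p_H(x)$, which for a bipartite graph with $c$ components equals (up to sign) $4^{?}$ times a product over components, and matching this between $T$ ($c = 1$) and $G$ ($c \geq 2$) forces $n \equiv 0 \pmod 4$.

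More concretely, I expect the cleanest route is: the multiplicity of $0$ as an eigenvalue of $L_1$ equals the number of bipartite components, so if $T$ is a tree the multiplicity is $1$, hence $G$ has exactly one bipartite component and the rest non-bipartite — but we already argued $G$ is a forest, contradiction unless... so one must be more careful and instead compare $p_T'(0)$ (the coefficient of $x^1$) against $p_G'(0)$. For a tree, $p_T(x)/x$ evaluated at $0$ is, by the matrix-tree-type expansion for $L_1$, equal to $\pm(\text{something involving }n)$; the known identity is that for a tree on $n$ vertices the coefficient of $x$ in $p_T(x)$ equals $(-1)^{n-1} \cdot n \cdot 2^{?}$ only when further divisibility holds. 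Working out this single coefficient — using that a cycle decomposition of $T$ covering all but one vertex must be a near-perfect matching together with loops, and a tree has a near-perfect matching covering $n-1$ vertices only when $n$ is even, with sign and power-of-$2$ bookkeeping — and comparing with the corresponding count for a genuine forest $G$ with $\geq 2$ components, yields the congruence $n = 4k$.

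The main obstacle I anticipate is the precise power-of-$2$ and sign bookkeeping in the first nonzero coefficient: for $L_1$ every edge used as a length-$2$ cycle contributes a factor $2$ (the $2^{\lng}$ is only for long cycles, but an edge as a transposition contributes $m_{ij}m_{ji} = 1$, while the off-diagonal entries of $xI - L_1$ are $-1$, giving $(-1)(-1) = 1$ — so actually edges contribute $+1$, loops contribute $\deg(v)$, and the sign is $(-1)^{\cy}$), so I must carefully recompute $\wt$ for matchings-plus-loops configurations and verify that the alternating sum over all such configurations on a tree versus on a $2$-component forest differs by a factor whose vanishing is governed by $n \bmod 4$. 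I would verify the formula on small cases ($P_4$ vs.\ $2K_2$, which are the $n = 4$ base case witnessing the theorem) before committing to the general computation, and I would lean on the disconnectedness of $G$ plus Corollary~\ref{cor:emptyglue} only if an inductive step on attaching pendant structures is needed — but I expect the coefficient computation alone suffices.
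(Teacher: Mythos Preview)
This theorem is not proved in the paper: it is quoted from \cite{faux}, and the paper only supplements it by constructing explicit witnesses for every $n=4k$ (via Corollary~\ref{cor:emptyglue} applied to the pair in Figure~\ref{fig:faux}). So there is no in-paper proof to compare your attempt against; one can only assess your argument on its own terms.

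Your sketch contains a genuine error that derails it. After correctly deducing that $G$ must have $n-1$ edges, you conclude ``$G$ is a forest that is not a tree.'' This is impossible: a forest on $n$ vertices with exactly $n-1$ edges is connected and hence \emph{is} a tree. A non-tree with $n-1$ edges necessarily contains a cycle and is disconnected. Relatedly, $\det(L_1)=0$ does not force $G$ to be bipartite globally, only to have at least one bipartite component. The correct structural picture, combining the edge count with the fact that the multiplicity of $0$ in the $L_1$-spectrum equals the number of bipartite components (which is $1$ for $T$), is that $G$ is the disjoint union of one tree $T'$ on $m$ vertices together with $r\ge 1$ odd-unicyclic graphs. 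The $4\mid n$ constraint then drops out of the coefficient of $x$: for $T$ this coefficient is $(-1)^{n-1}n$ (since $L_1$ and $L_{-1}$ are similar on bipartite graphs, and the matrix-tree theorem gives the product of nonzero Laplacian eigenvalues as $n$), while for $G$ it is $(-1)^{n-1}\,m\cdot 4^{r}$ (each odd-unicyclic component contributes $\det L_1=4$). Equating yields $n=4^{r}m$, hence $4\mid n$. Your proposal gestures toward exactly this coefficient comparison but cannot reach it, because the ``$G$ is a forest'' misstep sends all of your subsequent matching-and-loop bookkeeping down a path that does not apply to the actual $G$. You also do not address the ``if'' (existence) direction at all.
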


The question then arises whether there is always a tree/non-tree pair when $n=4k$. This can now be quickly established by checking that the graphs in  Figure~\ref{fig:faux} are cospectral on four vertices for the signless Laplacian. Since these are cospectral for $L_1$, we can now apply Corollary~\ref{cor:emptyglue} for the graphs in Figure~\ref{fig:faux} and conclude that we can attach \emph{any} rooted graph and the result will still be cospectral. Now, using a rooted tree on $k$ vertices we produce a pair of cospectral graphs on $4k$ vertices one of which is a tree and one a non-tree. Actually, we have established a stronger result than existence. Namely, that since the number of trees grows exponentially, then the number of these cospectral tree/non-tree pairs for the signless Laplacian will also grow exponentially.

\begin{figure}[!htb]
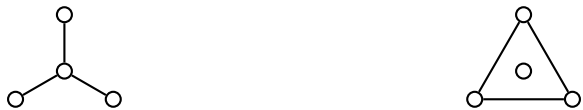

\centering
\FIGSIGNLESSCOSPEC
\caption{A pair of graphs which are cospectral for the signless Laplacian on four vertices.}
\label{fig:faux}
\end{figure}

In terms of this paper we also have the following related result.

\begin{corollary}
If $(T,B_1)$ and $(G,B_2)$ are coalescing cospectral for the signless Laplacian where $T$ is a tree and $G$ is a non-tree, then $|B_1|=|B_2|=4\ell$ for some $\ell$.
\end{corollary}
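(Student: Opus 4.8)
The plan is to feed two well-chosen rooted graphs into the hypothesis and combine the resulting cospectrality relations with Theorem~\ref{thm:4k}. The two choices are the trivial one‑vertex rooted graph (coalescing with which changes nothing, recovering $T$ and $G$ themselves) and the rooted graph $K_2$ with root at one endpoint (coalescing with which simply hangs one new pendant vertex off each vertex of the coalescing set).

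First, coalescing $(T,B_1)$ and $(G,B_2)$ with the one‑vertex rooted graph leaves $T$ and $G$ unchanged (this is precisely the base case in the proof of Theorem~\ref{thm:charpoly}), so $T$ and $G$ are cospectral with respect to $L_1$. Since $T$ is a tree and $G$ is a non‑tree, Theorem~\ref{thm:4k} forces $|V(T)|=|V(G)|=4k$ for some $k$. Next, let $\mathcal{T}$ and $\mathcal{G}$ be obtained by coalescing $(T,B_1)$ and $(G,B_2)$ with $K_2$ rooted at an endpoint. Attaching pendant vertices to a tree yields a tree, so $\mathcal{T}$ is a tree; and $\mathcal{G}$ still contains whatever cycle $G$ contained, so $\mathcal{G}$ is a non‑tree. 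By hypothesis $\mathcal{T}$ and $\mathcal{G}$ are cospectral for $L_1$, hence have the same number of vertices; since $|V(\mathcal{T})|=|V(T)|+|B_1|$ and $|V(\mathcal{G})|=|V(G)|+|B_2|$, and $|V(T)|=|V(G)|$, we get $|B_1|=|B_2|$. Finally, applying Theorem~\ref{thm:4k} to the cospectral tree/non‑tree pair $\mathcal{T},\mathcal{G}$ gives $|V(\mathcal{T})|=4m$ for some $m$; combined with $|V(\mathcal{T})|=4k+|B_1|$ this yields $|B_1|=4(m-k)$, so $|B_1|=|B_2|=4\ell$ with $\ell=m-k$.

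There is no real obstacle here; once the right test graphs are picked the argument is bookkeeping. The only points needing (routine) care are that coalescing a tree with copies of a rooted tree produces a tree while coalescing a graph containing a cycle preserves that cycle — both immediate, e.g.\ by a vertex/edge count plus connectivity, or by observing that every coalescence is at a cut vertex — and that cospectral graphs for $L_q$, an $|V|\times|V|$ matrix, must have equally many vertices. One could equally replace $K_2$ by an arbitrary rooted tree on $m$ vertices and read off $|B_1|(m-1)\equiv 0 \pmod 4$ for every $m$, again giving $4\mid|B_1|$; the choice $K_2$ is merely the cleanest.
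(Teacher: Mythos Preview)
Your proof is correct and follows essentially the same approach as the paper: apply Theorem~\ref{thm:4k} both to the original pair $T,G$ (obtained by coalescing with the trivial rooted graph) and to the pair obtained by coalescing with a rooted $K_2$, then subtract. You spell out a few details the paper leaves implicit---explicitly invoking the one-vertex coalescence to recover $T$ and $G$, verifying that $\mathcal{T}$ remains a tree while $\mathcal{G}$ remains a non-tree, and separately deducing $|B_1|=|B_2|$ from the vertex counts---but the underlying idea is identical.
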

\begin{proof}
Glue in $G=K_2$ into both $(T,B_1)$ and $(G,B_2)$ and produce another pair of graphs which are cospectral for the signless Laplacian for both a tree and non-tree. 
Now applying Theorem~\ref{thm:4k} for the graphs in our assumption and the newly formed graphs, we have that $|T|=4k$ and $|T|+|B_1|=4j$. The result now follows.
\end{proof}

Figure~\ref{fig:1_example} which shows two non-trees with coalescing sets having sizes not a multiple of $4$, and similarly there are many examples of two trees with coalescing sets having sizes not a multiple of $4$.

\subsection*{Coalescing cospectral sets and graph structure}
One direction of exploration is finding combinatorial properties to help identify coalescing cospectral sets. As an example, suppose that $(H_1,B_1)$ and $(H_2,B_2)$ are coalescing cospectral for the \emph{adjacency} matrix. Then we have that $p_{H_1,B_1}(x)=p_{H_2,B_2}(x)$, which means that the submatrices when removing the rows/columns corresponding to $B_1$ and $B_2$ respectively are cospectral. For the adjacency matrix these submatrices also represent the adjacency matrix for subgraphs, so we can conclude that the induced subgraphs on the vertices of $V(H_1)\setminus B_1$ and $V(H_2)\setminus B_2$ are cospectral. Applying the same argument on the complements via Theorem~\ref{thm:main} we have a similar relationship for the induced subgraphs on the vertices of $B_1$ and $B_2$. We summarize this in the following.

\begin{proposition}
Let $G[U]$ denote the induced subgraph of $G$ on the vertices of $U$. If $(H_1,B_1)$ and $(H_2,B_2)$ are coalescing cospectral for the adjacency matrix ($L_0$), then $H_1[B_1]$ and $H_2[B_2]$ are cospectral as is also $H_1[V(H_1)\setminus B_1]$ and $H_2[V(H_2)\setminus B_2]$.
\end{proposition}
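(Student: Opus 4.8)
The plan is to reduce this to the characteristic-polynomial identities we already have and then reinterpret those identities for the adjacency matrix. First I would invoke Theorem~\ref{thm:necessary}: since $(H_1,B_1)$ and $(H_2,B_2)$ are coalescing cospectral for $L_0$, we have $f_{H_1,B_1,k}(x)=f_{H_2,B_2,k}(x)$ for all $k$. Taking $k=0$ gives in particular $f_{H_1,B_1,0}(x)=p_{H_1,B_1}(x)$ equal to $f_{H_2,B_2,0}(x)=p_{H_2,B_2}(x)$, i.e.\ the submatrices obtained from the adjacency matrices of $H_1$ and $H_2$ by deleting the rows/columns of $B_1$ and $B_2$ respectively are cospectral.

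Next I would observe the crucial feature of the adjacency matrix (the $q=0$ case): deleting a set of rows/columns from $A(H)$ yields exactly the adjacency matrix of the induced subgraph on the remaining vertices, because there are no degree terms on the diagonal to worry about. Hence $p_{H_1,V(H_1)\setminus B_1}$ is literally the characteristic polynomial of $A(H_1[B_1])$, and similarly for $H_2$. Wait---I need to be careful about which set is deleted: $p_{H,S}$ deletes the vertices in $S$, so $p_{H_1,B_1}$ is the characteristic polynomial of the adjacency matrix of $H_1$ with the vertices of $B_1$ deleted, which is $A(H_1[V(H_1)\setminus B_1])$. So the $k=0$ equality gives that $H_1[V(H_1)\setminus B_1]$ and $H_2[V(H_2)\setminus B_2]$ are cospectral. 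That settles the second half of the proposition.

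For the first half, the idea is to apply Theorem~\ref{thm:main}: the hypothesis that $(H_1,B_1)$ and $(H_2,B_2)$ are coalescing cospectral for $L_0$ implies, via that theorem, that $(H_1,V(H_1)\setminus B_1)$ and $(H_2,V(H_2)\setminus B_2)$ are also coalescing cospectral for $L_0$. Now I rerun the argument of the previous paragraph with the roles of $B_i$ and $V(H_i)\setminus B_i$ swapped: the $k=0$ instance of Theorem~\ref{thm:necessary} applied to this new coalescing-cospectral pair gives $p_{H_1,V(H_1)\setminus B_1}(x)=p_{H_2,V(H_2)\setminus B_2}(x)$, which (again using that for $q=0$ deleting rows/columns gives the induced-subgraph adjacency matrix) says precisely that $H_1[B_1]$ and $H_2[B_2]$ are cospectral.

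There is no serious obstacle here; the work is entirely in packaging the already-established results. The one point to state carefully is the reduction from ``coalescing cospectral'' to the $k=0$ polynomial identity---one must note that $f_{H,B,0}(x)=\sum_{S\subseteq B,\,|S|=0}p_{H,S}(x)=p_{H,\emptyset}(x)$ is \emph{not} what we want; rather it is $p_{H,B}$ we need, so we should instead use that coalescing cospectrality gives $p_{\mathcal G_1}=p_{\mathcal G_2}$ for \emph{every} rooted $G$, and extract the needed submatrix identity directly, or equivalently observe that all the $f_{H,B,k}$ agree and the relevant one is the top one, $f_{H,B,|B|}(x)=p_{H,B}(x)$. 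Using $k=|B|$ (rather than $k=0$) is the clean choice: $f_{H,B,|B|}(x)=\sum_{S\subseteq B,|S|=|B|}p_{H,S}(x)=p_{H,B}(x)$, and the equality of these from Theorem~\ref{thm:necessary} is exactly what is needed. The only thing to be vigilant about is keeping straight the ``delete $S$'' convention throughout, which is the sole place a sign or indexing slip could creep in.
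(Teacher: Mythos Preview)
Your proposal is correct and follows essentially the same argument as the paper: extract $p_{H_1,B_1}=p_{H_2,B_2}$ from the $k=|B|$ case of Theorem~\ref{thm:necessary}, use the fact that for $q=0$ this is literally the characteristic polynomial of the induced subgraph on $V(H_i)\setminus B_i$, and then invoke Theorem~\ref{thm:main} to handle the complementary sets. Your mid-stream self-correction (from $k=0$ to $k=|B|$) lands in the right place; in a final write-up just start with $k=|B|$ and note that coalescing cospectrality forces $|B_1|=|B_2|$ so this index is well-defined for both sides.
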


The preceding argument demonstrates that the corresponding submatrices are cospectral; however for $q\ne 0$ these submatrices are not the same as subgraphs and so the corresponding induced subgraphs need not be cospectral (and in many cases have significantly different structure).

The coalescing cospectral sets are not only tied to graph structure but are also sensitive to the choice of matrix $L_q$. It is possible for a pair of graphs to be cospectral for multiple values of $q$ where the coalescing cospectral sets are different depending on the choice of $q$. An example of this is shown in  Figure~\ref{fig:different_glue}.

\begin{figure}[!htb]
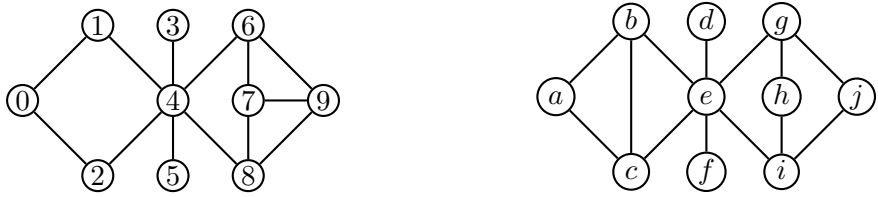

\centering
\FIGDIFFERENTGLUE
\caption{For these two graphs $4{\,:\,}e$ is coalescing cospectral for the Laplacian matrix, but not the adjacency matrix; while $7{\,:\,}b$ is coalescing cospectral for the adjacency matrix, but not the Laplacian matrix.}
\label{fig:different_glue}
\end{figure}

Much remains unknown about tying the coalescing cospectral sets to combinatorial properties of the graphs (either necessary or sufficient conditions).

\subsection*{Other matrices}
Our results have been focused on studying matrices which can be expressed in the form $qD+A$ for some fixed $q$. There are of course many other possible matrices that could be considered. One popular variation is the \emph{normalized adjacency matrix}, $D^{-1/2}AD^{-1/2}$ (which is spectrally equivalent to the probability transition matrix and also a spectral variation of the normalized Laplacian matrix). 

For the normalized adjacency matrix, there is no equivalent result of Theorem~\ref{thm:main}. To see this we consider the graphs shown in Figure~\ref{fig:fail}. On the left are two complete bipartite graphs which are cospectral for the normalized adjacency matrix \cite{chung}. On the right are the two graphs which result from coalescing an edge at each vertex of the graph which are \emph{not} cospectral. In particular, this fails the statement of Corollary~\ref{cor:emptyglue} which means that Theorem~\ref{thm:main} must not hold for the normalized adjacency matrix. The main issue is that while there is a formula for the characteristic polynomial when coalescing, it is more involved and has additional constraints for maintaining cospectrality (see \cite{Guo}).

\begin{figure}[!htb]
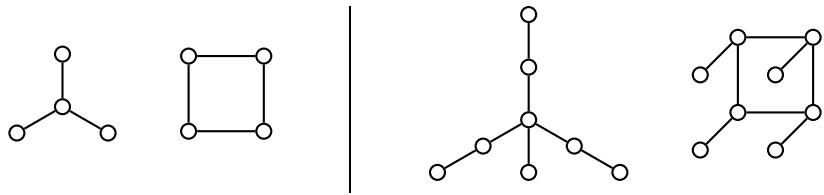

\centering
\FIGFAIL
\caption{The two graphs on the left are cospectral for the normalized adjacency matrix, while the two graphs on the left resulting from coalescing an edge at every vertex are \emph{not} cospectral for the normalized adjacency matrix.}
\label{fig:fail}
\end{figure}

Another matrix which has been studied is the \emph{distance matrix}, $\mathcal{D}$ where $\mathcal{D}_{u,v}$ records the distance between the vertices $u$ and $v$. Through some experimentation (namely for small graphs testing coalescing through a collection of random graphs) it appears that an equivalent result of Theorem~\ref{thm:main} might hold, and we offer the following.

\begin{conjecture}
If  $(H_1,B_1)$ and $(H_2,B_2)$ are coalescing cospectral with respect to $\mathcal{D}$, then $(H_1,V(H_1)\setminus B_1)$ and $(H_2,V(H_2)\setminus B_2)$ are also coalescing cospectral with respect to $\mathcal{D}$.
\end{conjecture}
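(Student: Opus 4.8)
The plan is to reproduce, for the distance matrix $\mathcal{D}$, the three‑step architecture behind Theorem~\ref{thm:main}: (i) a closed form for the characteristic polynomial of a coalescence, written as a combination, with coefficients depending only on the rooted graph $G$, of a family of invariants of $(H,B)$; (ii) a "separation" statement saying that coalescing cospectrality is equivalent to all of those $(H,B)$‑invariants matching, proved by probing with a sufficiently rich parametrized family of rooted graphs; and (iii) a complement identity showing the invariants attached to $V(H)\setminus B$ are determined by those attached to $B$. A useful first observation is that step (iii) in the form of Theorem~\ref{thm:complement} is actually matrix‑agnostic: its proof only uses that $f_{M,B,k}(x)=\sum_{i,j}\binom{|B|-i}{k}\omega_{i,j}x^{|V(H)|-i-j}$, where $\omega_{i,j}$ grades the principal minors of $xI-M$ by how many of their indices lie in $B$ versus outside, together with invertibility of the binomial‑coefficient matrix. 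Nothing there uses that $M=qD+A$. So for any fixed symmetric matrix $M$ indexed by $V(H)$, the family $\{f_{M,V(H)\setminus B,k}\}_k$ is recoverable from $\{f_{M,B,k}\}_k$; the entire "principal‑minor part" of the distance‑matrix story is therefore free, and the real work is to identify the extra invariants that appear for $\mathcal{D}$ and to prove a complement identity for them.

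For step (i): the decisive structural fact is that each copy of $G$ meets the rest of the coalesced graph only at its root, so there are no shortcuts and distances are additive through roots. For $h\in V(H)$ and $g$ in the copy of $G$ attached at $v$ one has $d(h,g)=d_H(h,v)+d_G(r,g)$, and for $g,g'$ in copies attached at distinct $v,v'$ one has $d(g,g')=d_G(g,r)+d_H(v,v')+d_G(r,g')$. Consequently the coupling block between $H$ and a $G$‑copy has the form $\mathbf{d}_v\mathbf{1}^{\mathsf T}+\mathbf{1}\mathbf{a}^{\mathsf T}$, where $\mathbf{d}_v=\mathcal{D}_He_v$ and $\mathbf{a}$ is the fixed vector of root‑distances in $G$, hence has rank at most $2$; and the coupling between two $G$‑copies is built from $\mathbf{a}$, $\mathbf{1}$, and the scalar $d_H(v,v')$. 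Eliminating the $G$‑blocks of $xI-\mathcal{D}_{\mathcal G}$ by Schur complements and the matrix‑determinant lemma then factors $\det(xI-\mathcal{D}_{\mathcal G})$ as a $(G,r)$‑only factor times $\det(xI-\mathcal{D}_H)$ times a small determinant whose entries are the resolvent quantities $\big((xI-\mathcal{D}_H)^{-1}\big)_{v,v'}$ for $v,v'\in B$, $\big((xI-\mathcal{D}_H)^{-1}\mathbf{1}\big)_v$ for $v\in B$, and $\mathbf{1}^{\mathsf T}(xI-\mathcal{D}_H)^{-1}\mathbf{1}$. Clearing denominators and using $\mathbf{d}_v=xe_v-(xI-\mathcal{D}_H)e_v$, these become minors and cofactor‑sums of $xI-\mathcal{D}_H$ indexed by subsets of $B$. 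The natural way to package them uniformly is to introduce the \emph{bordered distance matrix} $\widetilde{\mathcal D}_H$, obtained from $\mathcal{D}_H$ by adjoining one extra index $\ast$ with a $0$ on the new diagonal entry and $1$'s elsewhere in the new row and column; the expectation is that the dependence on $(H,B)$ is then carried entirely by principal‑submatrix characteristic polynomials of $\widetilde{\mathcal D}_H$ obtained by deleting subsets of $V(H)\setminus B$ (with $\ast$ kept or deleted), symmetrized over the $|B|$ copies. Pinning down this identification — the exact role of $\ast$ and which symmetric functions of these minors actually occur — is the first task.

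For step (ii): mimic Theorem~\ref{thm:necessary}. Substitute a one‑parameter family of rooted graphs $G$ — rooted stars $K_{1,\ell}$ (with center, and separately with a leaf, as root) are the obvious first candidates, and one may also need rooted paths, double‑stars, or brooms to reach enough independent $(G,r)$‑coefficients, since the $\mathcal{D}$‑formula has more $G$‑dependent terms than the $L_q$ one — compute the corresponding $\mathcal{D}_{G,r}$‑data as explicit rational functions of $x$ and of the parameter, and use that a polynomial in the parameter that vanishes for infinitely many parameter values is identically zero to conclude that every $(H,B)$‑invariant identified in step (i) must agree between the two pairs. Then apply the matrix‑agnostic version of Theorem~\ref{thm:complement} to $\widetilde{\mathcal D}_H$ (with kept part $\{\ast\}\cup B$, respectively its complement) to recover the $V(H)\setminus B$ invariants from the $B$ invariants, taking care that $\ast$ lies in neither $B$ nor its complement and so must be carried along symmetrically. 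Composing (i)–(iii) exactly as in the proof of Theorem~\ref{thm:main} yields the conjecture.

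The main obstacle is the interaction of non‑principal data with step (iii). For $L_q$ the cut vertex is "local" and everything collapses to principal minors $p_{H,S}(x)$, which is precisely the regime where the clean complement symmetry lives; for $\mathcal{D}$ the cut vertex is "global" — every vertex of $H$ sees every coalesced copy — which forces all‑ones contractions and genuinely non‑principal cofactors of $xI-\mathcal{D}_H$ into the formula. The bet is that all of this is absorbed into principal minors of the single fixed matrix $\widetilde{\mathcal D}_H$, so that the argument of Theorem~\ref{thm:complement} applies verbatim; if instead truly non‑principal minors survive, one would have to relate $B$‑side and $(V(H)\setminus B)$‑side minors through Jacobi‑type complementary‑minor identities that also involve $\det(xI-\mathcal{D}_H)$ itself, which is considerably more delicate, and it is conceivable that the conjecture would need a different mechanism in that case. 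A secondary concern is whether the family of rooted test graphs in step (ii) is genuinely rich enough to pin down every invariant from step (i); if it is not, coalescing cospectrality imposes fewer constraints than the full invariant family and the complement step has less to work with. Finally, one should record at the outset the "no shortcuts" property justifying additivity through roots — immediate here, since each copy of $G$ is attached at a single vertex — and treat the finitely many degenerate values of $x$ where the relevant determinants vanish exactly as in the $L_q$ proofs.
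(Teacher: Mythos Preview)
This statement is presented in the paper as a \emph{conjecture}, not a theorem: the authors explicitly leave it open, remarking that their cycle-decomposition technique does not extend, that they are ``not aware of \emph{any} generic formula for the distance matrix'' replacing \eqref{eq:general_charpoly}, and that ``the direct generalization of \eqref{eq:general_charpoly} does not hold.'' There is therefore no proof in the paper to compare your proposal against; what you have written is a research outline for an open problem.

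Viewed as such, the outline is reasonable, and your observation that the argument of Theorem~\ref{thm:complement} is matrix-agnostic --- it only uses that the principal-minor sums $f_{M,B,k}$ of a fixed symmetric matrix $M$, graded by how many deleted indices lie in $B$, are related to the $\omega_{i,j}$ by an invertible binomial transform --- is both correct and genuinely useful. But the proposal is not a proof, and your own language (``the expectation is'', ``the bet is'', ``it is conceivable that the conjecture would need a different mechanism'') flags exactly where it is incomplete.

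The decisive gap is your step~(i). A Schur-complement elimination of the $G$-blocks will, as you note, bring in the resolvent entries $\big((xI-\mathcal{D}_H)^{-1}\big)_{v,v'}$ for $v,v'\in B$ together with the contractions $\big((xI-\mathcal{D}_H)^{-1}\mathbf{1}\big)_v$ and $\mathbf{1}^{\mathsf T}(xI-\mathcal{D}_H)^{-1}\mathbf{1}$. Bordering by an all-ones row and column plausibly absorbs the $\mathbf{1}$-contractions, but for distinct $v,v'$ the quantities $\big((xI-\mathcal{D}_H)^{-1}\big)_{v,v'}$ are genuinely \emph{non-principal} cofactors of $xI-\mathcal{D}_H$, and there is no evident mechanism by which a single fixed bordering converts them into principal minors of $\widetilde{\mathcal D}_H$. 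For $|B|\ge 2$ these terms are forced to appear, since the cross-copy distance blocks carry the scalars $d_H(v,v')$ and hence couple the copies nontrivially; unless they cancel or reorganize in a way you have not demonstrated, the $(H,B)$-invariants do not live in the space on which the matrix-agnostic complement map acts. That is precisely the obstacle the paper's authors point to, restated in your framework. Until an explicit replacement for \eqref{eq:general_charpoly} is actually written down and its $(H,B)$-dependence is shown to be of principal-minor type in some fixed matrix, steps~(ii) and~(iii) have nothing to stand on, and the proposal remains a plan rather than a proof.
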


One obstacle to proving this conjecture is that our techniques hinges on cycle decompositions which allow for a way to express the characteristic polynomial of the result of coalescing two general graphs in simple terms, e.g.\ \eqref{eq:general_charpoly}. However, for the distance matrix the authors are not aware of \emph{any} generic formula for the distance matrix, and the direct generalization of \eqref{eq:general_charpoly} does not hold. (We remark in passing that some special cases of coalescing have been done by Heysse \cite{Heysse} using eigenvector arguments; coalescing involving some restricted trees or the cycle graph has also been done (see \cite{distancesurvey})).

So an important first step in tackling the conjecture might be to find some appropriate generalization of \eqref{eq:general_charpoly}. The authors look forward to seeing more progress in this direction.

\subsection*{Acknowledgments}
This research was conducted primarily at the 2022 Iowa State University Math REU which was supported through NSF Grant DMS-1950583.

\bibliographystyle{plain}
\bibliography{bibliography}
\end{document}